\documentclass[leqno]{amsart}
\usepackage{srctex,color}

\renewcommand{\Re}{\operatorname{Re}}
\newcommand{\Tr}{\mathrm{Tr}}

\newcommand{\dd}{\mathrm{d}}

\newcommand{\der}[3]{\displaystyle{\frac{\partial^{#3} #1}{\partial #2^{#3}}}}

\newcommand{\R}{\mathbb R}
\newcommand{\E}{\mathbb E}
\newcommand{\C}{\mathbb C}

\newcommand{\ep}{\varepsilon}

\newcommand{\TT}{{\mathcal T}}
\newcommand{\DD}{{\mathcal D}}
\newcommand{\cD}{{ \mathcal D}}
\newcommand{\inner}[3][]{(#2,#3 )_{#1}}

\newtheorem{theorem}{Theorem}[section]
\newtheorem{lemma}[theorem]{Lemma}
\newtheorem{proposition}[theorem]{Proposition}
\newtheorem{hypothesis}{Assumption}
\newtheorem{corollary}[theorem]{Corollary}

\theoremstyle{definition}

\theoremstyle{remark}
\newtheorem{remark}[theorem]{Remark}

\numberwithin{equation}{section}




\begin{document}

\title[Strong order for linear stochastic Volterra equations]{Strong order of convergence of a fully discrete approximation of a linear stochastic Volterra type evolution equation}

\author{Mih\'aly Kov\'acs}
\address{Department of Mathematics and Statistics, University of Otago, PO Box 56, Dunedin, 9054, New Zealand.}
\email{mkovacs@maths.otago.ac.nz}

\author{Jacques Printems}
\address{Laboratoire d'Analyse et de Math\'ematiques Appliqu\'ees, CNRS UMR 8050, 61, avenue du G\'en\'eral de Gaulle, Universit\'e Paris--Est, 94010 Cr\'eteil, France.}
\email{printems@u-pec.fr}
\date{}


\subjclass[2000]{34A08, 45D05, 60H15, 60H35, 65M12, 65M60}
\keywords{Stochastic Volterra equation, fractional differential equation, finite elements method, convolution quadrature, Euler scheme, strong order}

\begin{abstract}
In this paper we investigate a discrete approximation in time and in space of a Hilbert space valued stochastic process $\{u(t)\}_{t\in [0,T]}$ satisfying a stochastic linear evolution equation with a positive-type memory term  driven by an
additive Gaussian noise. The equation can be written in an abstract form as
$$
\dd u +  \left ( \int_0^t b(t-s) Au(s) \, \dd s \right )\, \dd t  = \dd W^{_Q},~t\in (0,T]; \quad u(0)=u_0 \in H,
$$
\noindent where $W^{_Q}$ is a $Q$-Wiener process on $H=L^2({\mathcal D})$ and where the main example of $b$ we consider is given by
$$
b(t) = t^{\beta-1}/\Gamma(\beta), \quad 0 < \beta <1.
$$
We let $A$ be an unbounded linear self-adjoint positive operator on $H$ and we further assume that there exist $\alpha >0$ such that $A^{-\alpha}$ has finite trace and that $Q$ is bounded from $H$ into $D(A^\kappa)$ for some real $\kappa$ with $\alpha-\frac{1}{\beta+1}<\kappa \leq \alpha$.

The discretization is achieved via an implicit Euler scheme and a Laplace transform convolution quadrature in time (parameter $\Delta t =T/n$), and a standard continuous finite element method in space (parameter $h$). Let
$u_{n,h}$ be the discrete solution at $T=n\Delta t$. We show that
$$
\left ( \E \| u_{n,h} - u(T)\|^2 \right )^{1/2}={\mathcal O}(h^{\nu} + \Delta t^\gamma),
$$
\noindent for any  $\gamma< (1 - (\beta+1)(\alpha - \kappa))/2 $ and $\nu \leq  \frac{1}{\beta+1}-\alpha+\kappa$.
\end{abstract}
\maketitle

\section{Introduction}

Let
$\mathcal D$ be a bounded domain in $\R^d$, $d\in \mathbb{N}$, and let $u$ be a real-valued stochastic process solution of the equation formally written as
\begin{equation}\label{eq:0}
\der{u(x,t)}{t}{} - \int_0^t b(t-s) \Delta u(x,s) \, \dd s = \dot{\xi}(x,t), \quad (x,t) \in {\mathcal D}\times(0,T],
\end{equation}
\noindent together with the initial condition $u(x,0) = u_0(x)$, $x\in \mathcal D$, and boundary condition $u|_{\partial \mathcal{D}}=0$. Here, $\dot{\xi}$ is a zero mean real valued Gaussian noise and the time kernel $b$ is assumed to be real-valued and of positive type; i.e.,
that for any $T>0$, the kernel $b$ belongs to $L^1(0,T)$ and for any continuous function $f$ on $[0,T]$ the following inequality holds:
\begin{equation}\label{eq:1}
\int_0^T \int_0^t b(t-s) f(s) f(t) \, \dd s \,  \dd t \geq 0.
\end{equation}
The deterministic version of such problems can be used to model viscoelasticity or heat conduction in materials with memory (see \cite{McLThomee93} for references). When $b$ is smooth,
 these equations exhibit a hyperbolic behaviour, whereas if $b$ has a weak singularity at $t=0$ (for example a Riesz potential), they exhibit certain parabolic features. In particular, when
\begin{equation}\label{eq:riesz}
b(t) = t^{\beta-1}/\Gamma(\beta), \quad 0 < \beta < 1,
\end{equation}
\noindent the homogeneous deterministic equation has a smoothing property which correspond to the inequality
\begin{equation}\label{eq:regular}
\| u^{(m)}(t)\|_{H^{2r}(\R)} \leq C \, t^{-(\beta+1)r -m} \|u_0\|_{L^2({\mathcal D})},
\end{equation}
 \noindent where $|r|\leq 1$ if $m\ge 1$ and where $0\leq r \leq 1$ if $m=0$, but with no further smoothing in the spacial variables (see e.g. \cite[Theorem 5.5]{McLThomee93}). The framework of this paper allows for slightly more general kernels but with
 similar smoothing effects and, in particular, they are of positive type. Hence, together with the positivity of the operator $-\Delta$, the deterministic equation will remain parabolic in character.

Next we introduce an abstract framework to describe the noise and equation \eqref{eq:0} more precisely.
Let $Q$ be a self-adjoint, nonnegative linear operator on $H:=L^2({\mathcal D})$ and $W^Q$ be a Wiener process in $H$ with covariance operator $Q$ (or, simply, $Q$-Wiener process).
 We set $A = -\Delta$, $D(A) = H^2({\mathcal D}) \cap H^1_0({\mathcal D})$ and $H= L^2({\mathcal D})$. Then $A$ can be seen as an unbounded linear operator on $H$ with domain $D(A)$. For $b$ given in \eqref{eq:riesz} and under reasonable assumptions on $\partial D$, our main assumption concerning $Q$ in \eqref{eq:0} is that $A^{\kappa} Q$ defines a bounded operator on $L^2({\mathcal D})$ with $d/2 -1/(\beta+1)<\kappa<d/2$.

If we write $u(t) = u(\cdot,t)$, considered as a $H$-valued stochastic process, then \eqref{eq:0} can be rewritten in the abstract It\^o form as
\begin{equation}\label{eq:2}
\dd u(t) + \left ( \int_0^t b(t-s) Au(s) \, \dd s \right ) \, \dd t = \dd W^Q(t), \quad t\in (0,T],
\end{equation}
\noindent with initial condition $u(0) = u_0 \in H$.

While the literature on numerical methods for deterministic infinite dimensional Volterra equations is abundant (see, for example, \cite{Pal,HaNo2011,Lubich_et_al1996,McLThomee93,xu2008}, which is a very incomplete list), the numerical analysis of stochastic Volterra equations is completely missing. We are only aware of \cite{KaRo} where an algorithm is described and numerical experiments are performed with no error analysis given.
We will consider a numerical approximation of \eqref{eq:2} by means of an Euler scheme and a Laplace transform convolution quadrature in time together
with a finite element method in space. Let $n\geq 1$ an integer, $\Delta t = T/n$ and $t_k = k \, \Delta t$, $k=0, \dots, n$. Let also $\{ V_h\}_{h>0}$ be a family of
finite dimensional subspaces of $D(A^{1/2})=H^1_0(\mathcal{D})$. For each $1\leq k \leq n$, we seek for an approximation of $u(t_k)$ in $V_h$ by $u_{k,h}$ defined by the following induction:
\begin{equation}\label{eq:3}
(u_{k,h} - u_{k-1,h},v_h) + \Delta t \sum_{j=1}^{k} \omega_{k-j} ( A u_{j,h},v_h) = \sqrt{\Delta t} (Q^{1/2} \chi_{k},v_h),\quad
k\geq 1,
\end{equation}
\noindent for any $v_h\in V_h$, where $\sqrt{\Delta t}\,  \chi_k $ is the noise increment and where $(\cdot,\cdot)$ is the inner product of $H$. The approximation of the convolution term in \eqref{eq:2} is achieved via a quadrature rule such that for any continuous function $f$ on $[0,T]$,
$$
\sum_{j=1}^k \omega_{k-j} f(t_j)  \sim \int_0^{t_k} b(t_k-s) f(s) \, \dd s=(b\star f)(t_k).
$$
 Then, the approximation of $b\star f$ on the time grid $t_k$, $k=0,\dots,n$, is obtained from a discrete convolution with the values of $f$ on the same grid. Before going into details, let us point out that not any quadrature rule can be chosen. In particular, it will be important for the chosen quadrature to satisfy a discrete analogue of \eqref{eq:1}.

 In order to understand the specific quadrature rule used in this paper, we will take the example of the Riesz kernel \eqref{eq:riesz}. Let us note that in this case the Laplace transform
of $b$ is $z^{-\beta}$ and the term $b\star \Delta u$ in \eqref{eq:0} can be seen as the fractional integral $(\partial /\partial t)^{-\beta} (\Delta u)$. Then, the
idea is to
use the same Euler approximation of $\partial/\partial t$
in both terms on the left hand side of (\ref{eq:0}). Since the discrete Laplace transform of the implicit Euler scheme is $(1-z)/\Delta t$, one chooses the quadrature weights to have discrete Laplace transform  $((1-z)/\Delta t)^{-\beta}$.

Such a convolution quadrature has been introduced in \cite{lubich88,lubich88II}. It was motivated by the fact that the main properties of the solution of the homogeneous
problem, like stability, existence, or regularity, are largely determined by the distribution of the frequencies of the kernel (by means of its Fourier or Laplace transform), especially when the
kernel has weak singularities or when it exhibits different behaviour at different time scales. Since, by construction, the discrete Laplace transform of the quadrature kernel is closely related to the Laplace transform
of the continuous kernel, it is thus possible to carry over frequency domain conditions from the continuous problem to the discretization and thereby obtain stable approximations.
Moreover, this kind of quadrature rule inherits the rate of approximation from the time integrator of $\partial/\partial t$. In the context of stochastic PDEs, we think that it is important to make sure that the deterministic part of the scheme is stable and that the perturbations are due to the noise only.

Although the analysis in the present paper allows for kernels slightly more general than \eqref{eq:riesz}, we follow the same idea: the convolution
quadrature weights $\{\omega_k\}$ in \eqref{eq:3} will be defined by means of the Laplace transform of the kernel $b$. Therefore, we choose
the quadrature coefficients to have generating function $\widehat b((1-z)/\Delta t)$ where $\widehat b$ denotes the Laplace transform of $b$; that is,
\begin{equation}\label{eq:omega}
\sum_{n=0}^{+\infty} \omega_k z^k = \widehat b\left ( \frac{1-z}{\Delta t} \right ), \quad |z|<1.
\end{equation}
We will not focus here on practical algorithms for the computations of the quadrature weights and we refer the reader to, for example, \cite{lubich88II}.

 While precise conditions on the kernel $b$ are postponed to Sections \ref{sec:prel} and \ref{sec:time}, we can already state our main result, Theorem \ref{theo:full}, with the above notations in the case of the specific kernel \eqref{eq:riesz} when $\cD$ is a convex polygonal domain using continuous, piecewise linear finite elements. We shall prove a (strong) error estimate of the form 
$$
\left( \E ( \| u_{n,h} - u(T) \|^2 \right)^{1/2} \leq C (\Delta t^\gamma + h^\nu),
$$
where $\gamma < (1 - (\beta+1)(d/2 - \kappa))/2$ and $\nu < 1/(\beta+1) - d/2 + \kappa$. Let us note that we recover the known order of convergence for the heat equation (see \cite{LLK2010,P2001,yan}) when $\beta\to 0$.

The paper is organized as follows. In Section \ref{sec:prel} we introduce notations, recall some basic preliminary results, and state our main assumptions on $A$, $Q$ and $b$. We note that Assumptions (\ref{eq:traceA})--(\ref{eq:Q}) on $A$ and $Q$ could be replaced by a single, somewhat sharper, assumption as discussed in Remarks \ref{rem:HSassumption}, \ref{rem:spaceaq}, \ref{rem:timeaq} and \ref{rem:fullaq}. It is, however, harder to check in most cases.
In Section \ref{sec:space} we study the space semi-discretization of \eqref{eq:0} and strong error estimates are derived
for smooth initial data under minimal regularity assumptions (Assumption \ref{hyp:b}) on $b$. In Section \ref{sec:time} we prove strong error
estimates for the time
semi-discrete scheme with non-smooth initial data. One of the key results in this direction is Theorem \ref{thm:lc}, where we prove a general $l^p$-stability result on Lubich's convolution quadrature based on the Backward Euler method for deterministic Volterra equations. Interestingly, this stability result implies (Corollary \ref{cor:smooth}) that the time-discrete scheme exhibits the same smoothing effect in time as the solution under Assumption \ref{hyp:b} on $b$. However, in order to obtain optimal convergence rates for the stochastic problem we need to put a further regularity restriction on $b$ in Subsection \ref{sub:dete}, Assumption \ref{hyp:b-anal}, which is in fact common in the deterministic literature for nonsmooth initial data. Indeed, Assumption \ref{hyp:b-anal} implies that the deterministic equation has an analytic resolvent family while Assumption \ref{hyp:b} only implies that the deterministic equation is parabolic. Unlike for equations with no memory term, these two notions are not equivalent (See \cite[Chapter 1, Section 3]{pruss}). As far as we know the derivation of nonsmooth initial data estimates using only parabolicity (Assumption \ref{hyp:b}) remains an open problem.
Finally, in the last section, we gather the results from the preceding sections and consider the fully discrete scheme.

\section{Notations and preliminairies}\label{sec:prel}

Let $(X,\|\cdot\|_X)$ and $(Y,\|\cdot\|_Y)$ be two Banach spaces and let ${\mathcal B}(X,Y)$ denote the space of bounded linear operators from $X$ into $Y$ endowed with the norm
$\|B\|_{{\mathcal B}(X,Y)} = \sup_{x\in X} \| Bx\|_Y/\|x\|_X$. When $X=Y$, we use the shorter notation ${\mathcal B}(X)$ for ${\mathcal B}(X,X)$.
 If $X$ is a Banach space and $I$ is an interval in $\R$ then, $L^p(I,X)$, $1\le p<\infty$, denotes the space of functions $f:I\to X$ which are measurable and $t\to \|f(t)\|^p$ is integrable on $I$, equipped with the usual norm. If $p=\infty$ then $L^\infty(I,X)$, denotes the space of functions $f:I\to X$ which are measurable and $t\to \|f(t)\|$ is essentially bounded on $I$ endowed with the usual norm.


Throughout this paper, $H$ denotes a real separable Hilbert space with inner product $(\cdot,\cdot)$ and associated norm $\|\cdot\|$. We consider the stochastic Volterra equation given in the abstract It\^o form as
\begin{equation}\label{eq:stovolterra}
\dd u + \left ( \int_0^t b(t-s) Au(s) \, \dd s \right )\, \dd t  = \dd W^{_Q},\quad t\in (0,T]; \quad u(0) = u_0 \in H,
\end{equation}
\noindent where  the process $\{u(t)\}_{t \in [0,T]}$ is
a $H$-valued stochastic process, $A$ is a densely defined, nonnegative self-adjoint unbounded operator on $H$ with compact inverse, and $W^{_Q}$ is a
$Q$-Wiener process in $H$ on a given probability space $(\Omega,{\mathcal F},{\mathbb P})$. The weak solution of \eqref{eq:stovolterra} is a mean-square continuous $H$-valued process satisfying
\begin{equation*}
  \left( u(t), \eta \right)
 + \int_0^t \int_0^rb(r-s)\left( u(s), A^*\eta\right)\,\dd s\,\dd r
=\left( u_0,\eta\right)
 + \int_0^t \left(\eta, \,\dd W^{Q}(s) \right),
\end{equation*}
for all $\eta \in D(A^*)$ almost surely for all $t\in [0,T]$.

It is well known that such assumptions on $A$ implies the existence of a sequence of nondecreasing positive real numbers $\{\lambda_k\}_{k\geq 1}$ and an orthonormal basis
$\{e_k\}_{k\geq 1}$ of $H$ such that
\begin{equation}\label{eq:spectral}
Ae_k = \lambda_k e_k, \quad \lim_{k\rightarrow +\infty} \lambda_k = +\infty.
\end{equation}

We define classically, by means of the spectral decomposition of $A$, the domains  $D(A^s)$ of fractional powers $s\in \R$ of $A$ and we set
$$
\|v\|_{s} = \| A^{s/2} v\|, \quad v \in D(A^{s/2}).
$$


\begin{remark}
We note that since $A$ is nonnegative self-adjoint, $-A$ generates an analytic contraction semigroup on $H$.
Moreover,  for any $\theta < \pi$,  there exists $M_\theta\geq 1$ such that the following holds:
$$
\| (z I + A)^{-1}\|_{{\mathcal B}(H)} \leq \frac{M_\theta}{|z|}, \quad \mbox{for any }\; z \in \Sigma_\theta,
$$
\noindent where $\Sigma_\theta = \{ z \in \C\backslash\{0\}, \; |\mathrm{arg}(z)| < \theta \}$.
\end{remark}

Let $\mathcal{L}_1(H)$ denote the set of nuclear operators from $H$ to $H$; that is, $T\in \mathcal{L}_1(H)$ if there are sequences $\{a_j\},\{b_j\}\subset H$ with $\sum_{j=1}^{\infty}\|a_j\| \|b_j\|<\infty$ and such that
\begin{equation*}
  Tx=\sum_{j=1}^{\infty}( x, b_j) a_j,
  \quad x\in H.
\end{equation*}
Sometimes these operators are referred to as {trace class} operators.
For $T\in \mathcal{L}_1(H)$ we define $\mathrm{Tr}(T)$, the trace of $T$, by
$$
\mathrm{Tr}(T) = \sum_{n=1}^{+\infty} (Be_n,e_n),
$$
where $\{e_n\}$ is an orthonormal basis of $H$. This definition turns out to be independent of the choice
basis. Furthermore, if $L\in \mathcal{L}_1(H)$ and $M\in \mathcal{B}(H)$, then $LM,ML\in \mathcal{L}_1(H)$ and
\begin{equation} \label{eq:trace_prop1}
\Tr{}(LM) = \Tr{}(ML).
\end{equation}
\noindent If $L$ is also symmetric and nonnegative, then
\begin{equation} \label{eq:trace_prop2}
\Tr{}(LM) \leq \Tr{}(L) \|M\|_{{\mathcal B}(H)}.
\end{equation}

Hilbert-Schmidt operators play also an important role in this paper.
An operator $L \in {\mathcal B}(H)$ is Hilbert-Schmidt if $L^*L\in \mathcal{L}_1(H)$  or,
equivalently, $LL^*\in \mathcal{L}_1(H)$. We denote by ${\mathcal L}_2(H)$ the space of
such operators. It is a Hilbert space under the norm
\begin{equation}
\label{e2.8a}
\|L\|_{{\mathcal L}_2(H)}= \left( \Tr{} (L^*L)\right)^{1/2}=  \left( \Tr{} (L L^*)\right)^{1/2}.
\end{equation}

Our analysis will also use the Laplace transform. Let $f : \R_+  \rightarrow H$ be subexponential; i.e., that for any
$\ep>0$ the function $t\mapsto f(t) e^{-\ep t}$ belongs to $L^1(\R_+,H)$. We define the Laplace transform of
$\widehat f:\C_+ \rightarrow H$ by
$$
\widehat f(z) = \int_0^{+\infty} f(t) e^{-z t}\, dt,\quad \mathrm{Re}\,z > 0,
$$
\noindent where we have used the same notation $H$ for the complexification of $H$.
We denote by $\star$ the Laplace convolution product on $[0,t]$ of two locally integrable subexponential functions $f,g \in L^1_{loc}(\R_+,H)$ defined as
$$
(f\star g)(t) = \int_0^t f(t-s)g(s) \, \dd s.
$$
It is well known that $f\star g\in L^1_{loc}(\R_+,H)$ is subexponential and
$$
\widehat{f\star g}\, (z) = \widehat f(z) \, \widehat g(z),\quad \mathrm{Re}\,z>0.
$$


\subsection{Main assumptions}
Next we state the main assumptions concerning the kernel $b$ and the operators $A$ and $Q$, which will be used throughout this paper.

Regarding $b$, first note that property \eqref{eq:1} can be
characterized by means
of the Laplace transform $\widehat b$ of $b$. It is equivalent to say that $\Re(\widehat b(\lambda)) \ge 0$ for any $\Re \lambda> 0$ (see \cite{NohelShea} or \cite[page 38]{pruss}). Now it is clear that the positivity property \eqref{eq:1}  is not sufficient, in general, to ensure smoothing effects like \eqref{eq:regular} when working with
kernels that are more general than \eqref{eq:riesz}. This is why, following \cite{CDaPP} and \cite{MP97}, we will impose stronger conditions on $b$.
\begin{hypothesis}\label{hyp:b}
The kernel $0\neq b\in L^1_{loc}(\R_+)$, is $3$-monotone; that is, $b$, $-\dot b$ are nonnegative, nonincreasing, convex, and $\lim_{t\to \infty}b(t)=0$. Furthermore,
\begin{equation}\label{eq:sector}
\rho  := 1 + \frac{2}{\pi}\sup \{ | \mathrm{arg} \, \widehat b(\lambda) |, \; \Re\lambda >0 \} \in (1,2).
\end{equation}
\end{hypothesis}
In the special case of the Riesz kernel given in \eqref{eq:riesz} one can easily show that $\rho = 1+\beta$. From now on we set $\beta=\rho-1$
with $\rho$ defined by \eqref{eq:sector}.
\begin{remark}
It follows from \cite[Proposition 3.10]{pruss} that for 3-monotone and locally integrable kernels $b$, condition \eqref{eq:sector} is equivalent to
\begin{equation}\label{eq:b-smooth}
\lim_{t\rightarrow 0} \frac{\frac 1t\int_0^t s b(s) \, ds}{\int_0^t -s \dot{b}(s) \, ds} < +\infty.
\end{equation}
Also note that, by \eqref{eq:sector}, we have that $ \Re (\widehat b(\lambda))\ge 0$ for $\Re\lambda>0$ and hence $b$ satisfies \eqref{eq:1}.
\end{remark}
For $A$ and $Q$ we suppose that there
exists numbers $\alpha>0$ and $ \kappa \in \mathbb{R}$ such that
\begin{equation}\label{eq:traceA}
\mathrm{Tr}(A^{-\alpha}) < +\infty,
\end{equation}
\noindent
\begin{equation}\label{eq:Q}
A^\kappa Q \in {\mathcal B}(H),\quad \alpha-\frac{1}{\rho}<\kappa\le \alpha.
\end{equation}

%

\subsection{The nonhomogeneous deterministic problem}

Given $f \in L^1([0,T];H)$, Assumption \ref{hyp:b} together with the fact that $A$ is positive and self-adjoint implies that the deterministic problem,
\begin{equation}\label{eq:detvolterra}
\dot u(t) + \int_0^t b(t-s) Au(s) \, ds = f(t), \quad t\in (0,T], \quad u(0)=u_0\in H,
\end{equation}
\noindent is well posed for all $T>0$. Indeed,  there exists a resolvent family $\{S(t)\}_{t\geq 0} \subset {\mathcal B}(H)$ which is strongly continuous for $t\ge 0$, differentiable for $t>0$ and uniformly bounded by 1, see \cite[Corollary 1.2 and Corollary 3.3]{pruss}. The unique mild solution of \eqref{eq:detvolterra} is given by the following variation of parameter formula \cite[Proposition 1.2]{pruss}
$$
u(t) = S(t)u_0 + \int_0^t S(t-s) f(s) \, ds, ~t\in [0,T].
$$
\begin{remark}\label{rem:existenceC1}
The positivity of the kernel $b$ defined in \eqref{eq:1}, together with the positivity of the operator $A$ already allows for the construction of a unique solution to \eqref{eq:detvolterra} using an energy argument, see \cite[Corollary 1.2]{pruss}. Assumption \ref{hyp:b} gives further integrability and smoothing properties for $\{S(t)\}_{t\ge 0}$.
\end{remark}

Note that such a resolvent family does not satisfy the semi-group property because of the non local feature of the memory term in \eqref{eq:detvolterra}. Nevertheless, it can be written explicitly using the spectral decomposition \eqref{eq:spectral} of $A$ as
\begin{equation}\label{eq:sk}
S(t) v = \sum_{k=1}^{+\infty} s_k(t) (v,e_k) e_k,
\end{equation}
\noindent where the functions $s_k(t)$ are the solutions of the ordinary differential equations
\begin{equation}\label{eq:skeq}
\dot{s_k}(t) + \lambda_k \int_0^t b(t-s) s_k(s)  \, ds = 0, \quad s_k(0)=1.
\end{equation}
The next proposition summarizes the main properties of the functions $\{s_k\}_{k\geq 1}$.
\begin{proposition}\label{prop:sk}
Suppose that $b$ satisfies Assumption \ref{hyp:b} and let $\rho\in (1,2)$ as defined in (\ref{eq:sector}). Then $\lim_{r\to\infty}s_k(r)=0$ for all $k\ge 1$ and there exists $C_0>0$ such that for any $k\geq 1$,
\begin{eqnarray}
\|s_k\|_{L^\infty(\R_+)} & \leq & 1,\label{eq:reg1}\\
\|\dot{s_k}\|_{L^1(\R_+)} & \leq & C_0,\label{eq:reg2}\\
\|t \dot{s_k}\|_{L^1(\R_+)} & \leq & C_0\, \lambda_k ^{-1/\rho},\label{eq:reg3}\\
\|s_k\|_{L^1(\R_+)} & \leq & C_0  \, \lambda_k ^{-1/\rho}.\label{eq:reg4}
\end{eqnarray}
\end{proposition}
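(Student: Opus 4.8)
The plan is to analyze each scalar ODE \eqref{eq:skeq} individually and extract the four bounds from the structure of the equation together with the positivity and monotonicity of $b$. Fix $k\ge 1$ and write $s=s_k$, $\lambda=\lambda_k$. The starting point is the observation that \eqref{eq:skeq} is exactly the scalar resolvent equation $\dot s + \lambda\, (b\star s) = 0$, $s(0)=1$, whose solution is the (scalar) resolvent $s(t) = s(t;\lambda)$ associated with the kernel $\lambda b$. Since $b$ is of positive type (a consequence of Assumption \ref{hyp:b}, see the Remark following it) and $\lambda>0$, the kernel $\lambda b$ is again of positive type, so the classical scalar theory (e.g.\ \cite[Corollary 1.2 and Proposition 3.10]{pruss}, or direct energy estimates) gives that $s$ is nonincreasing on $\R_+$ in absolute value and, more importantly, that $s$ is completely monotone, or at least that $\|s\|_{L^\infty(\R_+)}\le s(0)=1$, which is \eqref{eq:reg1}; multiplying \eqref{eq:skeq} by $s$ and integrating, the positivity of $\lambda b$ forces $\int_0^T \dot s(t) s(t)\,\dd t = \tfrac12(s(T)^2-1)\le 0$, and letting $T\to\infty$ gives $\lim_{r\to\infty}s(r)=0$ once one knows the limit exists (monotonicity of $|s|$, which follows from $3$-monotonicity of $b$ via the representation of $s$ as a Laplace transform of a positive measure).

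For \eqref{eq:reg2}, I would integrate \eqref{eq:skeq} directly: since $s(t)\to 0$, $\int_0^\infty \dot s(t)\,\dd t = -1$, and if $\dot s$ has a sign (nonpositivity of $\dot s$, again from complete monotonicity of $s$ under $3$-monotone $b$) then $\|\dot s\|_{L^1(\R_+)} = \bigl|\int_0^\infty \dot s\bigr| = 1$, giving \eqref{eq:reg2} with $C_0=1$. If one does not want to invoke sign information, one uses the scalar resolvent identity $\dot s = -\lambda\,(b\star s)$ together with $\|s\|_\infty\le 1$ and the fact that $\widehat b$ has positive real part to bound $\|b\star s\|_{L^1}$; this is where the sectoriality bound \eqref{eq:sector} enters the estimate uniformly in $k$.

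The estimates \eqref{eq:reg3} and \eqref{eq:reg4} are the ones carrying the decay rate $\lambda_k^{-1/\rho}$ and are the heart of the proposition; this is the step I expect to be the main obstacle. The natural route is via the Laplace transform. Transforming \eqref{eq:skeq} gives $\widehat s(z) = \dfrac{1}{z + \lambda \widehat b(z)}$ for $\Re z>0$. For the Riesz kernel $\widehat b(z)=z^{-\beta}$ one has $\widehat s(z) = z^{\beta-1}/(z^\beta + \lambda)$ and the claimed rates follow by a Mittag--Leffler / contour-integration computation and a scaling $z\mapsto \lambda^{1/\rho} z$ (note $\rho=1+\beta$, so $\lambda^{1/\rho}=\lambda^{1/(1+\beta)}$, matching the exponent); in the general case Assumption \ref{hyp:b}, specifically the sectoriality \eqref{eq:sector}, guarantees that $z+\lambda\widehat b(z)$ stays in a sector bounded away from the negative real axis and that $\widehat s$ admits an analytic extension to a sector of angle $>\pi/2$ with the resolvent-type bound $|\widehat s(z)|\le C/|z|$ and $|\widehat s(z)|\le C/(\lambda|\widehat b(z)|)$ there, uniformly in $\lambda$. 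One then deforms the Bromwich contour to the boundary of that sector and estimates $\int_0^\infty t|\dot s(t)|\,\dd t$ and $\int_0^\infty |s(t)|\,\dd t$ by splitting the contour at $|z|\sim \lambda^{1/\rho}$: on $|z|\lesssim\lambda^{1/\rho}$ one uses the bound $|\widehat s(z)|\lesssim 1/(\lambda|z|^{-\beta})=|z|^\beta/\lambda$ and on $|z|\gtrsim\lambda^{1/\rho}$ the bound $|\widehat s(z)|\lesssim 1/|z|$; in both regimes the crossover scale $\lambda^{1/\rho}$ produces exactly the factor $\lambda^{-1/\rho}$. For the $t\dot s$ bound one uses that multiplication by $t$ corresponds to differentiation $-\tfrac{\dd}{\dd z}$ of $\widehat{\dot s}(z) = z\widehat s(z)-1$, so one needs the analogous sector bound on $\widehat s\,'(z)$, which follows from Cauchy's estimate applied to the already established bound on $\widehat s$ in a slightly larger sector. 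The uniformity in $k$ of the constant $C_0$ is the delicate bookkeeping point, and it rests entirely on the fact that the sector angle and the implied constants in \eqref{eq:sector} do not depend on $\lambda$.
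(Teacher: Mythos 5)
Your proposal founders on its central step. The route you take to \eqref{eq:reg3}--\eqref{eq:reg4} is to deform the Bromwich contour into a sector of opening angle larger than $\pi$, claiming that the sectoriality condition \eqref{eq:sector} guarantees an analytic extension of $\widehat s_k(z)=\bigl(z+\lambda_k\widehat b(z)\bigr)^{-1}$ to a sector $\Sigma_\theta$ with $\theta>\pi/2$, together with resolvent-type bounds there and a Cauchy estimate for $\widehat s_k{}'$ in a ``slightly larger sector''. But \eqref{eq:sector} constrains the \emph{argument of the values} of $\widehat b$ on the open right half-plane; it says nothing about the \emph{domain of analyticity} of $\widehat b$. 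Under Assumption \ref{hyp:b} alone, $\widehat b$ extends only continuously to $i\R\setminus\{0\}$; the analytic extension beyond the imaginary axis with the bounds you invoke is precisely Assumption \ref{hyp:b-anal}, which Proposition \ref{prop:sk} does not assume, and the paper explicitly stresses that the two are not equivalent (Assumption \ref{hyp:b} gives parabolicity but not an analytic resolvent, cf.\ \cite[Chapter 1, Section 3]{pruss}). So the contour deformation is not available, and the estimates must instead be extracted from boundary values on $i\R$, e.g.\ by Carlson--Beurling type $L^1$ criteria combined with the bounds on $\widehat b$ and $\widehat b{}'$ on the imaginary axis that $3$-monotonicity provides (via \eqref{eq:b-smooth}); this is how the results the paper cites are actually proved, the paper itself simply quoting \cite[Corollary 1.2]{pruss} for \eqref{eq:reg1}, \cite[Proposition 6]{MP97} for \eqref{eq:reg2}--\eqref{eq:reg3}, and \cite[Lemma 3.1]{CDaPP} for \eqref{eq:reg4} and the limit. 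Your scaling heuristic (crossover at $|z|\sim\lambda_k^{1/\rho}$, correct exponent $\lambda_k^{-1/\rho}$) identifies the right mechanism, but the proof of the two key estimates is missing under the stated hypotheses.

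There are secondary gaps as well. Complete monotonicity of $s_k$ does not follow from $3$-monotonicity of $b$ (it requires a completely monotone kernel), so the sign information $\dot s_k\le 0$, the monotonicity of $|s_k|$, and the resulting identity $\|\dot s_k\|_{L^1}=1$ are unjustified; your fallback for \eqref{eq:reg2} (bounding $\lambda_k\|b\star s_k\|_{L^1}$ from $\|s_k\|_\infty\le1$ and $\Re\widehat b\ge0$) is not a proof, since $b\notin L^1(\R_+)$ in general (the Riesz kernel is not integrable at infinity) and the factor $\lambda_k$ must be absorbed, which is exactly the $\lambda$-uniform part that needs work. Finally, the energy identity yields only $|s_k(T)|\le 1$, not $s_k(T)\to 0$; the limit statement follows instead a posteriori from \eqref{eq:reg2} and \eqref{eq:reg4}, since $\dot s_k\in L^1(\R_+)$ makes $s_k(t)$ convergent as $t\to\infty$ and $s_k\in L^1(\R_+)$ forces that limit to be zero. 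The energy argument for \eqref{eq:reg1} itself is fine and is essentially the content of \cite[Corollary 1.2]{pruss}.
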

\begin{proof}
Estimate \eqref{eq:reg1} follows from \cite[Corollary 1.2]{pruss}, inequalities (\ref{eq:reg2}) and (\ref{eq:reg3}) can be found in \cite[Proposition 6]{MP97} and estimate (\ref{eq:reg4}) is shown in \cite[Lemma 3.1]{CDaPP} where also the fact $\lim_{r\to\infty}s_k(r)=0$ for all $k\ge 1$ is shown in the proof of the lemma.
\end{proof}

Smoothing effects of the resolvent family $\{S(t)\}_{t\ge 0}$ when $b$ satisfies Assumption \ref{hyp:b} can be now easily proved using Proposition \ref{prop:sk}. 
\begin{proposition}\label{prop:smooth}
Let $b$ and $\rho$ as in Proposition \ref{prop:sk}. Then for any $t>0$, there exist a constants $C_0,C_1>0$ such that for any
$0\leq s \le 2/\rho$ and $0 \leq s' \le 2$,
\begin{eqnarray}
\| A^{s/2} S(t)\|_{{\mathcal B}(H)}   & \leq & C_0 \, t^{-s \rho/2}, \quad t>0, \label{eq:regS1} \\
&& \nonumber \\
\| A^{-s'/2} \dot{S}(t) \|_{{\mathcal B}(H)} & \leq & C_1\|b\|^{s'/2}_{L^1(0,t)} \, t^{s'/2 - 1}, \quad t>0. \label{eq:regS2}
\end{eqnarray}
\end{proposition}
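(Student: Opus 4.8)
The plan is to reduce both estimates, via the spectral representation \eqref{eq:sk}, to uniform-in-$k$ pointwise bounds on the scalar functions $s_k$ and $\dot s_k$, and then to invoke Proposition \ref{prop:sk}. Since $A^{s/2}S(t)v=\sum_{k\ge1}\lambda_k^{s/2}s_k(t)(v,e_k)e_k$ and, differentiating \eqref{eq:sk} termwise, $A^{-s'/2}\dot S(t)v=\sum_{k\ge1}\lambda_k^{-s'/2}\dot s_k(t)(v,e_k)e_k$, we have $\|A^{s/2}S(t)\|_{{\mathcal B}(H)}=\sup_{k\ge1}\lambda_k^{s/2}|s_k(t)|$ and $\|A^{-s'/2}\dot S(t)\|_{{\mathcal B}(H)}=\sup_{k\ge1}\lambda_k^{-s'/2}|\dot s_k(t)|$. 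Thus \eqref{eq:regS1} and \eqref{eq:regS2} amount to the bounds $\lambda_k^{s/2}|s_k(t)|\le C_0 t^{-s\rho/2}$ and $\lambda_k^{-s'/2}|\dot s_k(t)|\le C_1\|b\|_{L^1(0,t)}^{s'/2}t^{s'/2-1}$, uniformly in $k\ge1$ and $t>0$.

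For \eqref{eq:regS1} I would first prove the pointwise interpolation bound $|s_k(t)|\le C_0\min\{1,\,t^{-1}\lambda_k^{-1/\rho}\}$. The bound by $1$ is \eqref{eq:reg1}; for the other, since $s_k$ is absolutely continuous with $\dot s_k\in L^1(\R_+)$ and $s_k(r)\to0$ as $r\to\infty$ (Proposition \ref{prop:sk}), the identity $s_k(t)=-\int_t^\infty\dot s_k(r)\,\dd r$ gives $|s_k(t)|\le\frac1t\int_t^\infty r|\dot s_k(r)|\,\dd r\le\frac{C_0}{t}\lambda_k^{-1/\rho}$ by \eqref{eq:reg3}. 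Then $\lambda_k^{s/2}|s_k(t)|\le C_0 t^{-s\rho/2}$ follows by distinguishing $\lambda_k\le t^{-\rho}$ (where $\lambda_k^{s/2}\le t^{-s\rho/2}$, using $s\ge0$) from $\lambda_k>t^{-\rho}$ (where $\lambda_k^{s/2}\, t^{-1}\lambda_k^{-1/\rho}=t^{-1}\lambda_k^{s/2-1/\rho}\le t^{-1}(t^{-\rho})^{s/2-1/\rho}=t^{-s\rho/2}$, using $s\le 2/\rho$ so that $s/2-1/\rho\le0$). This part is routine arithmetic.

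For \eqref{eq:regS2} I would combine two facts. First, the defining equation \eqref{eq:skeq} gives $\lambda_k^{-1}|\dot s_k(t)|=|(b\star s_k)(t)|\le\|s_k\|_{L^\infty(0,t)}\|b\|_{L^1(0,t)}\le\|b\|_{L^1(0,t)}$, using $b\ge0$ and \eqref{eq:reg1}. Second, one needs the uniform pointwise bound $|\dot s_k(t)|\le C_0 t^{-1}$ for all $k\ge1$, $t>0$. Granting these, a H\"older-type splitting of the exponent finishes the argument: $\lambda_k^{-s'/2}|\dot s_k(t)|=\big(\lambda_k^{-1}|\dot s_k(t)|\big)^{s'/2}\big(|\dot s_k(t)|\big)^{1-s'/2}\le\|b\|_{L^1(0,t)}^{s'/2}(C_0 t^{-1})^{1-s'/2}$, so that, after taking $\sup_k$, \eqref{eq:regS2} holds with $C_1=\max\{1,C_0\}$ for every $s'\in[0,2]$ (note that at the endpoint $s'=2$ the estimate is simply $\sup_k|(b\star s_k)(t)|\le\|b\|_{L^1(0,t)}$, and at $s'=0$ it is the uniform bound on $\dot s_k$).

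The genuinely delicate point---indeed the only one---is the uniform pointwise bound $|\dot s_k(t)|\le C_0 t^{-1}$: the weighted $L^1$-estimates \eqref{eq:reg2}--\eqref{eq:reg3} of Proposition \ref{prop:sk} do not by themselves control $\dot s_k(t)$ pointwise. What is required is the finer pointwise information on the scalar resolvents of a $3$-monotone kernel contained in, or extractable from the proofs of, \cite{MP97,CDaPP} (the same information that in fact already underlies \eqref{eq:reg3}); such an estimate interpolates a $t^{-1}$-type bound for small $t$ against a bound decaying in $\lambda_k$ for large $t$, whose crossover in $\lambda_k$ produces precisely $t^{-1}$. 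Supplying this pointwise derivative bound directly from Assumption \ref{hyp:b} alone, rather than quoting it, is where the real work lies.
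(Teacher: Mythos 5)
Your proposal follows essentially the same route as the paper's proof. For \eqref{eq:regS1}, your two pointwise bounds $|s_k(t)|\le 1$ and $|s_k(t)|\le C_0\,t^{-1}\lambda_k^{-1/\rho}$ combined with the crossover at $\lambda_k=t^{-\rho}$ are exactly the paper's argument, which packages the same information as a one-parameter H\"older interpolation, $\int_0^\infty u^\delta|\dot s_k(u)|\,\dd u\le C_0\lambda_k^{-\delta/\rho}$ giving $|s_k(t)|\le C_0 t^{-\delta}\lambda_k^{-\delta/\rho}$ with $\delta=\rho s/2$; and for \eqref{eq:regS2} your scalar splitting $\lambda_k^{-s'/2}|\dot s_k(t)|=\bigl(\lambda_k^{-1}|\dot s_k(t)|\bigr)^{s'/2}|\dot s_k(t)|^{1-s'/2}$ is the paper's interpolation between $\|\dot S(t)A^{-1}\|_{{\mathcal B}(H)}\le\|b\|_{L^1(0,t)}$ (from \eqref{eq:skeq} and \eqref{eq:reg1}) and the uniform bound $\|\dot S(t)\|_{{\mathcal B}(H)}\le Mt^{-1}$. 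The one ingredient you leave open, namely $\sup_k|\dot s_k(t)|\le Ct^{-1}$, is precisely the point the paper also does not reprove: it invokes \cite[Corollary 3.3]{pruss}, which applies under Assumption \ref{hyp:b} since a $3$-monotone kernel satisfying \eqref{eq:sector} makes the equation parabolic with a sufficiently regular kernel, yielding $\|\dot S(t)x\|\le Mt^{-1}\|x\|$ for $t>0$. So no extra work is needed there beyond citing that result; with that reference inserted in place of your appeal to \cite{MP97,CDaPP}, your proof is complete and coincides with the paper's.
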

\begin{proof}
For any $\delta \in (0,1)$ and any $k \geq 1$, H\"older's inequality, \eqref{eq:reg2} and \eqref{eq:reg3} yields
\begin{eqnarray*}
\int_0^{+\infty} u^\delta |\dot{s_k}(u)| \, \dd u & = & \int_0^{+\infty} u^\delta |\dot{s_k}(u)|^\delta |\dot{s_k}(u)|^{1-\delta} \, \dd u  \label{eq:regS1-1}\\
& \leq & \left (  \int_0^{+\infty} u |\dot{s_k}(u)| \, \dd u \right )^{\delta} \left ( \int_0^{+\infty} |\dot{s_k}(u)| \, \dd u  \right )^{1-\delta} \nonumber\\
&\leq & C_0 \; \lambda_k^{-\delta/\rho}.\nonumber
\end{eqnarray*}
Note, that the previous final estimate also holds for $\delta=0,1$ by \eqref{eq:reg2} and \eqref{eq:reg3}. Then, since $s_k(t) = - \int_t^{+\infty} u^{-\delta} u^{\delta} \dot{s_k}(u) \, du$ as $\lim_{r\to\infty}s_k(r)=0$ for all $k\ge 1$ by Proposition \ref{prop:sk}, we can conclude that
\begin{equation}\label{eq:regS1-2}
|s_k(t)| \leq C_0 \, t^{-\delta} \lambda_k^{-\delta/\rho}, \quad t>0,\quad \delta \in [0,1].
\end{equation}
Thus, for any $s \in [0,2/\rho]$ and $x \in H$, \eqref{eq:regS1-2} with $0\le \delta = \rho s/2 \le 1$ implies
\begin{eqnarray*}
\|A^{s/2} S(t) x \|^2 & = & \sum_{k\geq 1} \lambda_k^{s} \,  s_k(t)^2 (x,e_k)^2
 \leq  C_0 \; t^{-\rho s/2} \|x\|^2,
\end{eqnarray*}
which is \eqref{eq:regS1}. To show \eqref{eq:regS2}, we use \cite[Corollary 3.3]{pruss} which states that
under Assumption \ref{hyp:b} and since $0$ belongs to the resolvent set of $A$, there is $M>0$ such that
\begin{equation} \label{eq:regS2-1}
\| \dot{S}(t)x \| \leq M  t^{-1} \|x\|, \quad x \in H, \quad t>0.
\end{equation}
On the other hand, we can bound $\dot{S}(t)x$ for $x\in D(A)$ as follows:
\begin{align}\label{eq:regS2-2}
\| \dot{S}(t) x \|^2 & =  \sum_{k\ge 1} (\dot{s_k}(t))^2 (x,e_k)^2\\
& = \sum_{k \ge 1} \lambda_k^2 \left ( \int_0^t b(t-s) s_k(s) ds \right )^2 (x,e_k)^2 \leq  \|b\|_{L^1(0,t)}^2 \| A x \|^2,
\end{align}
where we have used \eqref{eq:skeq} and \eqref{eq:reg1}.
Finally, interpolation between \eqref{eq:regS2-1} and \eqref{eq:regS2-2} yields \eqref{eq:regS2}.
\end{proof}

\begin{remark}\label{rem:Sdot}
The estimate in \eqref{eq:regS2} does not provide an optimal rate, in fact it is the worst case scenario, as further smoothing may come from $\|b\|_{L^1(0,t)}$. The rate can be improved if we impose further regularity assumptions on $b$. Indeed, if in addition, $b$ satisfies Assumption \ref{hyp:b-anal} from Subsection \ref{sub:dete}, then by \eqref{eq:omega0} and \eqref{eq:omega1} it follows that $\hat{b}(\lambda)\sim \lambda^{1-\rho}$ as $\lambda\to \infty$. Thus, the nonnegativity of $b$ implies that $\|b\|_{L^1(0,t)}\le Ct^{\rho-1}$ by a Tauberian theorem for the Laplace transform (see, for example, \cite[Chapter V, Theorem 4.3]{Widder}). Therefore, in this case, we get a sharper estimate
$$
\| A^{-s'/2} \dot{S}(t) \|_{{\mathcal B}(H)}  \leq  C_1 \, t^{\rho s'/2 - 1}, \quad t>0,~0 \leq s' \le 2.
$$
Nevertheless, the rate in given \eqref{eq:regS2} is sufficient for our needs when it is used in the deterministic error analysis for smooth initial data.
\end{remark}

\subsection{The continuous stochastic problem.}

Next we recall an existence result for the problem \eqref{eq:stovolterra} and, for the sake of completeness, we indicate a proof (see \cite[Theorem 2.1]{CDaPP} and we refer to \cite{sperlich} for more general noise).

\begin{proposition}\label{prop:existence}
Let $A$ and $Q$ satisfy  \eqref{eq:traceA}--\eqref{eq:Q} and let $b$ satisfy Assumption \ref{hyp:b}.  Then there exists an unique $H$-valued (Gaussian) weak solution $u$ of \eqref{eq:stovolterra} given by  the variation of constants formula
\begin{equation}\label{eq:stoconv}
u(t) = S(t) u_0 + \int_0^t S(t-s) \, dW^{_Q}(s).
\end{equation}
Furthermore, the stochastic convolution term has a version whose trajectories are a.s. $\theta$-H\"older continuous for any $\theta < ( 1 - \rho(\alpha - \kappa))/2$.
\end{proposition}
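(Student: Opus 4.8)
The plan is to establish two things: first, the existence and uniqueness of the weak solution via the variation of constants formula, and second, the H\"older regularity of the stochastic convolution. For the first part, since the equation is linear with additive noise, the candidate solution is given by \eqref{eq:stoconv}. I would verify that the stochastic convolution $Z(t) := \int_0^t S(t-s)\, dW^{_Q}(s)$ is well-defined as an $H$-valued process. By the theory of stochastic integration in Hilbert spaces (It\^o isometry), this requires
$$
\int_0^t \| S(t-s) Q^{1/2} \|_{{\mathcal L}_2(H)}^2 \, ds < +\infty.
$$
Using the spectral representation \eqref{eq:sk}, writing $Q^{1/2}$ in terms of $A^{-\kappa/2}$ (since $A^\kappa Q \in {\mathcal B}(H)$ gives $\|A^{\kappa/2} Q^{1/2}\|_{{\mathcal B}(H)} < \infty$, hence $\|Q^{1/2} x\| \le C \|A^{-\kappa/2} x\|$), one bounds the Hilbert-Schmidt norm by
$$
\| S(\sigma) Q^{1/2} \|_{{\mathcal L}_2(H)}^2 \le C \sum_{k\ge 1} s_k(\sigma)^2 \lambda_k^{-\kappa} \cdot (\text{coefficients of } A^{-\alpha}),
$$
more precisely by splitting $\lambda_k^{-\kappa} = \lambda_k^{\alpha-\kappa}\lambda_k^{-\alpha}$ and using \eqref{eq:regS1-2} in the form $s_k(\sigma)^2 \le C_0 \sigma^{-2\delta}\lambda_k^{-2\delta/\rho}$ with $\delta$ chosen so that $2\delta/\rho = \alpha-\kappa$, i.e. $\delta = \rho(\alpha-\kappa)/2$, which lies in $[0,1)$ precisely by \eqref{eq:Q}. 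This gives $\| S(\sigma) Q^{1/2}\|_{{\mathcal L}_2(H)}^2 \le C\, \sigma^{-\rho(\alpha-\kappa)} \Tr(A^{-\alpha})$, and since $\rho(\alpha-\kappa) < 1$ the time integral converges. Uniqueness of the weak solution follows from linearity: the difference of two solutions solves the homogeneous deterministic equation with zero initial data, which has only the trivial solution by well-posedness of \eqref{eq:detvolterra}. Mean-square continuity of $S(t)u_0$ follows from strong continuity of $\{S(t)\}$, and of $Z$ from the Hilbert-Schmidt estimate and dominated convergence.

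For the H\"older continuity of $Z$, the standard approach is the factorization method (Da Prato--Kwapie\'n--Zabczyk), or alternatively a direct Kolmogorov-type estimate on the Gaussian increments. I would use the Gaussian structure: since $Z(t)-Z(s)$ is Gaussian, it suffices to bound $\E\| Z(t) - Z(s)\|^2$ by $C|t-s|^{2\theta}$ for some $\theta$ slightly larger than the claimed exponent (then invoke the Kolmogorov continuity theorem, using that for Gaussian random variables all moments are controlled by the second moment, so one gets H\"older continuity for any exponent strictly below $\theta$). Writing, for $s < t$,
$$
Z(t) - Z(s) = \int_s^t S(t-r)\, dW^{_Q}(r) + \int_0^s \bigl( S(t-r) - S(s-r) \bigr)\, dW^{_Q}(r),
$$
the first term has second moment $\int_s^t \| S(t-r)Q^{1/2}\|_{{\mathcal L}_2(H)}^2\, dr \le C \int_0^{t-s} \sigma^{-\rho(\alpha-\kappa)}\, d\sigma = C(t-s)^{1-\rho(\alpha-\kappa)}$. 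For the second term, I would write $S(t-r) - S(s-r) = \int_{s-r}^{t-r} \dot S(\tau)\, d\tau$ and estimate $\|\dot S(\tau) Q^{1/2}\|_{{\mathcal L}_2(H)}$ via the spectral representation and the bounds on $\dot s_k$ from Proposition \ref{prop:sk} (in particular $(\dot s_k(\tau))^2 \le \lambda_k^2 \|b\|_{L^1(0,\tau)}^2 s_k$-type bounds, or better, an interpolated estimate between \eqref{eq:regS2-1} and \eqref{eq:regS2-2}) combined with the trace of $A^{-\alpha}$. The exponent budget is $\rho(\alpha-\kappa) < 1$ from \eqref{eq:Q}, and one needs to distribute a further smoothing power of $A$ on $\dot S$ to make the $\tau$-integral near zero converge; balancing this yields second moment of order $|t-s|^{2\theta}$ with $2\theta$ approaching $1-\rho(\alpha-\kappa)$, hence H\"older continuity for any $\theta < (1-\rho(\alpha-\kappa))/2$ as claimed.

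The main obstacle is the second-term estimate: controlling $\int_0^s \|(S(t-r)-S(s-r))Q^{1/2}\|_{{\mathcal L}_2(H)}^2\, dr$ requires a careful trade-off in the spectral sum between the singularity of $\dot s_k$ near $\tau = 0$ (which behaves like $\tau^{-1}$ by \eqref{eq:regS2-1}, too singular to integrate directly) and the decay in $\lambda_k$ supplied jointly by $A^\kappa Q$ and $\Tr(A^{-\alpha}) < \infty$. The resolution is to interpolate the time-derivative bounds — using $\|A^{-s'/2}\dot S(\tau)\|_{{\mathcal B}(H)} \le C\tau^{s'/2 - 1}$ from \eqref{eq:regS2}, or more directly the mode-wise estimates for $\dot s_k$ — so that a fractional power $A^{s'/2}$ is absorbed into the Hilbert-Schmidt / trace count, leaving a time singularity $\tau^{\rho s'/2 - 1}$ (using the sharper version when available) that is integrable precisely when the remaining spectral exponent still leaves $\Tr(A^{-\alpha})$ finite. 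Getting this bookkeeping exactly right so that the resulting H\"older exponent is the stated $(1-\rho(\alpha-\kappa))/2$ rather than something smaller is the delicate point; everything else (measurability, the Gaussian moment comparison, the Kolmogorov theorem, uniqueness by linearity) is routine.
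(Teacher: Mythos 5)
Your first half — well-posedness of the stochastic convolution via It\^o's isometry and the spectral bounds of Proposition \ref{prop:sk} — is correct and essentially the paper's own argument; your variant (the pointwise bound $\|S(\sigma)Q^{1/2}\|^2_{\mathcal{L}_2(H)}\le C\sigma^{-\rho(\alpha-\kappa)}\Tr(A^{-\alpha})\|A^{\kappa}Q\|$ from \eqref{eq:regS1-2} with $\delta=\rho(\alpha-\kappa)/2$, integrable since $\rho(\alpha-\kappa)<1$ by \eqref{eq:Q}) is a harmless substitute for the paper's integrated bound $\int_0^t s_k^2\le\|s_k\|_{L^\infty}\|s_k\|_{L^1}\le C\lambda_k^{-1/\rho}$, and the uniqueness and mean-square continuity remarks are fine.

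The gap is in the H\"older part, precisely at the point you flag as ``delicate''. First, the factorization method is not available here: $\{S(t)\}$ is a resolvent family, not a semigroup, so the identity $S(t-s)=S(t-\sigma)S(\sigma-s)$ underlying that method fails. Second, and more seriously, the mechanism you propose for the term $\int_0^s\|(S(t-r)-S(s-r))Q^{1/2}\|^2_{\mathcal{L}_2(H)}\,\dd r$ does not close. Assumptions \eqref{eq:traceA}--\eqref{eq:Q} only give $\|A^{q}Q^{1/2}\|_{\mathcal{L}_2(H)}<\infty$ for $q\le-(\alpha-\kappa)/2\le 0$; hence writing $\dot S(\tau)Q^{1/2}=\bigl(A^{-q}\dot S(\tau)\bigr)\bigl(A^{q}Q^{1/2}\bigr)$ forces a \emph{nonnegative} power $-q\ge(\alpha-\kappa)/2$ onto $\dot S(\tau)$, and no bound for $\|A^{\mu}\dot S(\tau)\|_{\mathcal{B}(H)}$ with $\mu>0$ is available (mode-wise one only has $|\dot s_k(\tau)|\le\min\{M\tau^{-1},\lambda_k\|b\|_{L^1(0,\tau)}\}$, which cannot absorb positive powers of $\lambda_k$). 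Estimate \eqref{eq:regS2} goes the other way (negative powers on $\dot S$), and using it would require $\|A^{s'/2}Q^{1/2}\|_{\mathcal{L}_2(H)}<\infty$ with $s'\ge0$, which the trace budget does not allow unless $\kappa=\alpha$ and $s'=0$, in which case $\tau^{-1}$ is not integrable. Moreover the sharper rate $\tau^{\rho s'/2-1}$ you invoke requires Assumption \ref{hyp:b-anal} (see Remark \ref{rem:Sdot}), which is not assumed in this proposition.

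The argument that does work under Assumption \ref{hyp:b} alone — and is what the paper's ``similar techniques'' refers to, being the continuous analogue of Lemma \ref{lem:hol} — avoids pointwise singular bounds on $\dot S$ entirely. Writing $h=t-s$ and using $|s_k|\le 1$,
\begin{equation*}
\int_0^s\bigl(s_k(u+h)-s_k(u)\bigr)^2\,\dd u
\le 2\int_0^s\int_u^{u+h}|\dot s_k(\tau)|\,\dd\tau\,\dd u
\le 2h\|\dot s_k\|_{L^1(\R_+)}\le Ch,
\end{equation*}
uniformly in $k$ by \eqref{eq:reg2}, while \eqref{eq:reg1} and \eqref{eq:reg4} give the stability endpoint $\int_0^s\bigl(s_k(u+h)-s_k(u)\bigr)^2\,\dd u\le C\lambda_k^{-1/\rho}$. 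Taking geometric means yields $Ch^{1-\eta}\lambda_k^{-\eta/\rho}$, and choosing $\eta>\rho(\alpha-\kappa)$ makes the sum against $Q$ finite by \eqref{eq:traceA}--\eqref{eq:Q}, giving $\E\|Z(t)-Z(s)\|^2\le C|t-s|^{2\gamma}$ for every $\gamma<(1-\rho(\alpha-\kappa))/2$; your concluding Gaussian/Kolmogorov step is then correct as stated. Your first increment term, $\int_s^t\|S(t-r)Q^{1/2}\|^2_{\mathcal{L}_2(H)}\,\dd r\le C(t-s)^{1-\rho(\alpha-\kappa)}$, is fine.
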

\begin{proof}
Analogously to \cite[Theorem 5.4]{DPZ}, it is sufficient to show that the stochastic convolution is well-defined. By It\^o's Isometry,
\begin{eqnarray*}
\mathbb{E}\left\|\int_0^t S(t-s) \, dW^{_Q}(s)\right\|^2&=&\int_0^t \| S(t-s) Q^{1/2} \|^2_{{\mathcal L}_2(H)} ds \\
&=&  \int_0^t \sum_{i\ge 1} \|S(t-s) Q^{1/2} e_i \|^2 \, ds \\
&=&\int_0^t \sum_{i,j\ge 1} (S(t-s) Q^{1/2} e_i,e_j)^2 \, ds \\
&=& \sum_{j\ge 1} \sum_{i\ge 1} \left ( \int_0^t s^2_j(t-s) \, ds \right ) (Q^{1/2} e_i, e_j)^2 \, ds \\
&\leq & C_0 \sum_{j\ge 1} \sum_{i\ge 1} \lambda_j^{-1/\rho} (Q^{1/2} e_i, e_j)^2\\
& = & C_0 \| A^{-1/(2\rho)} Q^{1/2}\|_{\mathcal{L}_2(H)}^2,
\end{eqnarray*}
\noindent where we have used Parseval's identity, \eqref{eq:reg1} and \eqref{eq:reg4}. By \eqref{eq:Q} we have that $-1/\rho - \kappa < -\alpha$, and thus using also \eqref{eq:trace_prop2},
\begin{align*}
\| A^{-1/(2\rho)} Q^{1/2} \|^2_{\mathcal{L}_2(H)} &=\Tr(A^{-1/\rho}Q)=\Tr(A^{-1/\rho - \kappa}A^{\kappa}Q)\\
&\le \mathrm{Tr}(A^{-1/\rho - \kappa}) \|A^{\kappa} Q\|^2_{\mathcal{B}(H)}
\le   \mathrm{Tr}(A^{-\alpha}) \|A^\kappa Q\|_{\mathcal{B}(H)}.
\end{align*}
Finally, the proof of the H\"older regularity in time of $u$ uses similar techniques and is omitted.
\end{proof}

\begin{remark}\label{rem:HSassumption}
Note that assumptions  \eqref{eq:traceA}--\eqref{eq:Q} are stronger than the minimal assumption $\| A^{-1/(2\rho)} Q^{1/2} \|_{\mathcal{L}^2(H)} < +\infty$ needed for the existence of a mean squared continuous solution. One can replace  \eqref{eq:traceA}--\eqref{eq:Q} by $$\| A^{(s-\frac{1}{\rho})/2} Q^{1/2} \|_{\mathcal{L}^2(H)} < +\infty$$ for some $s> 0$ as a single main assumption on $A$ and $Q$ and obtain H\"older regularity of order less than $\min(\frac12,\frac{\rho s}{2})$.
\end{remark}

\section{Space discretization} \label{sec:space}
In this section we discretize \eqref{eq:stovolterra} in space by a standard piecewise continuous finite element method. We refer to the monograph \cite{Thomeebook} for further details on finite elements. We shall derive strong error estimates for the spatially semidiscrete problem for smooth initial data only imposing Assumption \ref{hyp:b} on $b$. We will see later that for time discretization and also for the fully discrete scheme, we have to put further restrictions on $b$. Let $\{\TT_h\}_{0<h<1}$ denote a family of triangulations of $\cD$, with mesh size $h>0$ and consider
   finite element spaces $\{ V_h \}_{0<h<1}$, where $V_h\subset H^1_0(\cD)$ consists of continuous piecewise linear functions vanishing at the boundary of $\DD$. In order to derive the finite element formulation we look for a $V_h$-valued process $u_h$ such that
\begin{equation*}
\left\{  \begin{aligned}
      &\inner{\dd u_h(t)}{\chi}+\int_0^tb(t-s)\inner{\nabla u_h(t)}{\nabla \chi}\,\dd s\,\dd t
       =\inner{\dd W^Q(t)}{\chi},~ \chi\in V_h, \ t>0,\\
       &\inner{u_h(0)}{\chi}=\inner{u_0}{\chi}.
  \end{aligned}
  \right.
\end{equation*}
We introduce the "discrete Laplacian"
\begin{equation}\label{def:Ah}
  A_{h}:V_h\to V_h,   \quad
    \inner{A_{h} \psi}{ \chi} = \inner{ \nabla \psi}{\nabla \chi},\quad \psi,\chi \in V_h,
\end{equation}
and the orthogonal projector
$$
    P_{h}: H \to V_h,\quad
   \inner{P_{h} f}{ \chi} = \inner{ f} {\chi},\quad \chi \in V_h.
$$
It is clear that the operator $A_h$ is a positive definite bounded operator on $V_h$. 
Let us note also that using the definition \eqref{def:Ah} of $A_h$, the following uniform inequality can be easily derived
\begin{equation}\label{eq:33}
\|A_h^{-1/2} P_h x \| \leq \| A^{-1/2} x\|, \quad x\in H.
\end{equation}
Then, using the $L_2$-stability of $P_h$ and some interpolation theory, we also have that
\begin{equation}\label{eq:Ah}
\|A_h^{-\delta}P_hx\|\le \|A^{-\delta}x\|,\quad \delta\in [0,\frac{1}{2}], \quad x \in H.
\end{equation}
Similarly to $-A$, the operator $-A_h$ generates an analytic contraction semigroup on $V_h$ and satisfies the uniform resolvent estimate
\begin{equation*}
\|z(z+A_h)^{-1}P_h\|=\|z R(z,A_h)P_h\|\le M_{\phi},
\end{equation*}
for $z \in \Sigma_{\phi}=\{z\in \mathbb{C}: |\arg(z)|<\phi<\pi\}.$
Since $A_h R(z,A_h) = I - zR(z,A_h)$, it follows that
\begin{equation}\label{eq:resolvAh}
\| A_h R(z,A_h)P_h \|_{{\mathcal B}(H)} \leq M_\phi+1, \quad z \in \Sigma_\phi.
\end{equation}
Then we can rewrite the spatially semidiscrete problem in the same form as the original one as
\begin{equation}\label{semid}
\left\{
  \begin{aligned}
  &\dd u_h + \left(\int_0^tb(t-s)A_h u_h(s) \,\dd s\right)\,\dd t = P_h\,\dd W^Q(t),\quad t>0, \\
  &u_h(0)=P_hu_0.
\end{aligned}
\right.
\end{equation}
Similarly to the original problem the weak solution is given by
$$
u_h(t)=S_h(t)P_hu_0+\int_0^tS_h(t-s)P_h\,\dd W^Q(s),
$$
where the resolvent family $\{S_h(t)\}_{t\ge 0}$ can be written explicitly as
$$
S_h(t)P_h u_0=\sum_{k=1}^{\infty}s_{h,k}(t)( u_0, e_{h,k}) e_{h,k}.
$$
Here $(\lambda_{h,k},e_{h,k})$ are the eigenpairs of $A_h$ and $s_{h,k}(t)$ are the solution of the ODEs
$$
\dot{s}_{h,k}(t)+\lambda_{h,k}\int_0^t b(t-s)s_{h,k}(s)\,\dd s =0, ~s_{h,k}(0)=1.
$$
We have the following stability result.
\begin{lemma}\label{lem:stab}
If $b$ satisfies Assumption \ref{hyp:b}, then for some $C>0$,
$$
\int_0^t\|S(s)x\|^2\,\dd s\le C\|x\|^2_{-\frac{1}{\rho}},~t>0,
$$
and
$$
\int_0^t\|S_h(s)P_hx\|^2\,\dd s\le C\|x\|^2_{-\frac{1}{\rho}},~t>0,~h>0.
$$
\end{lemma}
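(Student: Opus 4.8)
The plan is to diagonalize both resolvent families and reduce everything to the scalar estimates of Proposition~\ref{prop:sk}. For the continuous problem, using the spectral representation \eqref{eq:sk} together with Parseval's identity,
$$
\int_0^t \|S(s)x\|^2\,\dd s = \sum_{k\ge 1}\Big(\int_0^t s_k(s)^2\,\dd s\Big)(x,e_k)^2
\le \sum_{k\ge 1}\Big(\int_0^{\infty} s_k(s)^2\,\dd s\Big)(x,e_k)^2,
$$
so it suffices to bound $\int_0^{\infty} s_k(s)^2\,\dd s$ by $C\lambda_k^{-1/\rho}$. This follows immediately from the trivial interpolation $L^2\le L^\infty\cdot L^1$ combined with \eqref{eq:reg1} and \eqref{eq:reg4}:
$$
\int_0^{\infty} s_k(s)^2\,\dd s \le \|s_k\|_{L^\infty(\R_+)}\,\|s_k\|_{L^1(\R_+)}\le C_0\,\lambda_k^{-1/\rho}.
$$
Since $\|x\|_{-1/\rho}^2=\sum_{k\ge1}\lambda_k^{-1/\rho}(x,e_k)^2$, summing yields the first inequality with $C=C_0$.

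For the semidiscrete family I would argue in exactly the same way, the only point requiring care being that the scalar bounds of Proposition~\ref{prop:sk} apply verbatim to the functions $s_{h,k}$. This is indeed the case: each $s_{h,k}$ solves the same scalar Volterra ODE as $s_k$ with the positive number $\lambda_{h,k}$ in place of $\lambda_k$, and the constant $C_0$ in Proposition~\ref{prop:sk} depends only on $b$ (through $\rho$), not on the particular positive eigenvalue. Hence $\|s_{h,k}\|_{L^\infty(\R_+)}\le 1$ and $\|s_{h,k}\|_{L^1(\R_+)}\le C_0\lambda_{h,k}^{-1/\rho}$ uniformly in $h$ and $k$, and therefore
$$
\int_0^t\|S_h(s)P_hx\|^2\,\dd s \le C_0\sum_{k\ge1}\lambda_{h,k}^{-1/\rho}(P_hx,e_{h,k})^2 = C_0\,\|A_h^{-1/(2\rho)}P_hx\|^2 .
$$

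Finally, since $\rho\in(1,2)$ we have $\tfrac1{2\rho}\in(\tfrac14,\tfrac12)\subset[0,\tfrac12]$, so \eqref{eq:Ah} applied with $\delta=1/(2\rho)$ gives $\|A_h^{-1/(2\rho)}P_hx\|\le\|A^{-1/(2\rho)}x\|=\|x\|_{-1/\rho}$, which completes the proof with the same constant $C=C_0$, uniform in $h$. The only genuine subtlety — and the step I would be most careful about — is the transfer of Proposition~\ref{prop:sk} to the perturbed eigenvalues $\lambda_{h,k}$ with an $h$-independent constant; this is precisely why it matters that the bounds imported from \cite{MP97,CDaPP} are stated with a constant depending on $b$ alone. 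Everything else is Parseval, the elementary $L^2\le L^\infty\cdot L^1$ inequality, and the already-established estimate \eqref{eq:Ah}.
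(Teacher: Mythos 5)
Your proposal is correct and follows essentially the same route as the paper: spectral diagonalization plus Parseval, the bound $\int_0^\infty s_k^2\,\dd s\le \|s_k\|_{L^\infty(\R_+)}\|s_k\|_{L^1(\R_+)}\le C_0\lambda_k^{-1/\rho}$ from Proposition \ref{prop:sk}, the observation that these scalar estimates apply to $s_{h,k}$ with a constant independent of the eigenvalue (hence of $h$), and finally \eqref{eq:Ah} with $\delta=1/(2\rho)$. Your explicit remark about the uniformity of $C_0$ in the discrete eigenvalues is exactly the point the paper makes in passing, so nothing further is needed.
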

\begin{proof}
We have, by \eqref{eq:reg1} and \eqref{eq:reg4}, that
\begin{multline*}
\int_0^t\|S(s)x\|^2\,\dd s=\sum_{k=1}^{\infty}\int_0^ts^2_k(s)\,\dd s\, \inner{x}{e_k}^2\\
\le
\sum_{k=1}^{\infty}\|s_k\|_{L^{\infty}(\R_+)}\|s_k\|_{L^1(\R_+)}\inner{x}{e_k}^2
\le
C_0 \sum_{k=1}^{\infty} \lambda_k^{-1/\rho} \inner{x}{e_k}^2
=C_0\|x\|^2_{-\frac{1}{\rho}}.
\end{multline*}
As the constants in \eqref{eq:reg1} and \eqref{eq:reg4} do not depend on $\lambda_k$, we similarly obtain
$$
\int_0^t\|S_h(s)P_hx\|^2\,\dd s\le C_0\|A_h^{-1/2\rho}P_hx\|^2.
$$
Finally, since $-1/2<-1/2\rho<-1/4$, using (\ref{eq:Ah}) with $\delta = 1/(2\rho)$, completes the proof.
\end{proof}

The error analysis is based on the Ritz projection
\begin{equation*}
R_h:H^1_0(\cD)\to V_h,\quad \inner{\nabla R_h v}{\nabla \chi} =\inner{\nabla v}{\nabla \chi},~v\in H^1_0(\cD),~\chi\in V_h.
\end{equation*}
In particular, we assume that $R_h$ satisfies the error bound
\begin{equation}\label{eq:Ritz}
\|R_hv-v\|\le Ch^{\gamma}\|v\|_{\gamma},~v\in D(A^{\gamma/2}),~1\le \gamma\le 2.
\end{equation}
This puts some restriction on the domain $\cD$ but it is satisfied for convex polygonal domains, for instance.

Next we prove an $L^2((0,\infty),H)$ error estimate for the space semidiscretization of the deterministic problem. It is an extension of the result in \cite{choi} where the special kernel $b(t)=\frac{1}{\Gamma(\beta)}e^{-t}t^{\beta-1}$ was considered.
\begin{proposition}\label{prop:semid}
If $b$ satisfies Assumption \ref{hyp:b} and \eqref{eq:Ritz} holds,  then
$$
\int_0^{\infty}\|S(t)x-S_h(t)P_hx\|^2\,\dd t \le Ch^{2s}\|x\|^2_{s-\frac{1}{\rho}},~0\le s\le 2.
$$
\end{proposition}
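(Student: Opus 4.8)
The plan is to reduce the estimate, via spectral/Laplace-transform representations, to a resolvent-type bound on the error operator $F_h(t) := S(t) - S_h(t)P_h$ acting on a single eigenvector, and then to integrate in $t$ using Parseval/Plancherel on the Laplace transform side. First I would write the Laplace transforms of $S$ and $S_h$ explicitly: from \eqref{eq:skeq} one has $\widehat{s_k}(z) = (z + \lambda_k z \widehat b(z))^{-1}$, so $\widehat S(z) = (z + z\widehat b(z) A)^{-1}$ and likewise $\widehat S_h(z) P_h = (z + z\widehat b(z) A_h)^{-1} P_h$. Setting $\mu(z) := z\,\widehat b(z)$ (so that $\widehat S(z) = (zI + \mu(z) A)^{-1}$), the condition \eqref{eq:sector} that $b$ is of positive type with sector angle controlled by $\rho$ means precisely that $\mu(z)$ maps the right half-plane into a sector, and standard deterministic theory (as used already for Proposition \ref{prop:sk}) gives that $z\mapsto \widehat S(z)$, $\widehat S_h(z)P_h$ extend analytically to a sector $\Sigma_\theta$ with $\theta > \pi/2$ and satisfy there the resolvent-type bounds $\|\widehat S(z)\|_{{\mathcal B}(H)} \le C|z|^{-1}$, $\|A\widehat S(z)\|_{{\mathcal B}(H)} \le C|\mu(z)|^{-1}$, with analogous bounds for $A_h$ via \eqref{eq:resolvAh} and \eqref{eq:Ah}.

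Next, by Plancherel's theorem for the Laplace transform along the imaginary axis (Parseval for $L^2(\R_+,H)$), it suffices to bound
\[
\int_{-\infty}^{\infty} \big\| \widehat S(i\tau) x - \widehat S_h(i\tau) P_h x \big\|^2 \, \dd\tau \le C h^{2s} \|x\|_{s-1/\rho}^2 .
\]
The core is an algebraic identity for the error of two resolvents. Writing $\widehat F_h(z) := \widehat S(z) - \widehat S_h(z)P_h$, one inserts the Ritz projection: $\widehat F_h(z) = (\widehat S(z) - R_h \widehat S(z)) + (R_h\widehat S(z) - \widehat S_h(z)P_h)$. The first term is handled directly by the Ritz bound \eqref{eq:Ritz}, which costs $h^\gamma$ at the price of $\gamma$ powers of $A^{1/2}$, i.e. of a factor $\|A^{\gamma/2}\widehat S(z)x\|$; using $\|A^{\gamma/2}\widehat S(z)\| \le C|z|^{-1+ \gamma/(\dots)}$ type bounds (interpolating the two resolvent estimates above, exactly as in the proof of Proposition \ref{prop:smooth}) and integrating in $\tau$ produces the claimed $h^{2s}\|x\|^2_{s-1/\rho}$. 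For the second term one derives, as in the parabolic finite-element literature (the Wheeler/Thomée trick), an equation of the form $(zI + \mu(z)A_h)(R_h\widehat S(z)x - \widehat S_h(z)P_h x) = \mu(z) A_h(R_h - I)\widehat S(z)x + z(R_h-P_h)\widehat S(z)x$, and then inverts using the uniform bound $\|(zI+\mu(z)A_h)^{-1}A_h\|_{{\mathcal B}(H)} \le C|\mu(z)|^{-1}$ together with \eqref{eq:33}–\eqref{eq:Ah}; again $(R_h - I)\widehat S(z)x$ is estimated by \eqref{eq:Ritz}, and the powers of $z$ and $\mu(z)$ are arranged so that the $\tau$-integral converges and yields the same bound.

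The main obstacle is the bookkeeping of exponents: one must track how many powers of $A$ (equivalently, how much regularity of $x$, i.e. the weight $\|x\|_{s-1/\rho}$) are consumed against how many factors of $|z|^{-1}$ and $|\mu(z)|^{-1}$ are available, and then check that the resulting function of $\tau$ is genuinely in $L^1(\R)$ for the full range $0\le s\le 2$ — the endpoints $s=0$ (no extra regularity, but then one must exploit the $L^2$-in-time smoothing from Lemma \ref{lem:stab} rather than a naive pointwise bound) and $s=2$ being the delicate cases. Concretely one needs the behaviour $|\mu(i\tau)| \sim |\tau|^{1-\rho}$ (or rather $|\mu(i\tau)|\gtrsim |\tau|^{1-\rho}$ for large $|\tau|$ and $\lesssim |\tau|^{1-\rho}$ for small $|\tau|$, which follows from $3$-monotonicity of $b$ and \eqref{eq:b-smooth}) to see that the relevant integrals $\int (1+|\tau|)^{a}|\mu(i\tau)|^{b}\,\dd\tau$ converge exactly when $s$ lies in $[0,2]$; this is where the sectoriality encoded in Assumption \ref{hyp:b} is used in an essential, quantitative way. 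Everything else — the two resolvent identities, the interpolation between the $|z|^{-1}$ and $|\mu(z)|^{-1}$ bounds, the application of \eqref{eq:Ritz} — is routine once the correct exponent split is fixed. I would also, for safety, contour-shift the Laplace inversion into the sector $\Sigma_\theta$ with $\theta$ slightly above $\pi/2$ to justify the Plancherel step and to have room in the estimates, closing the argument with a standard density argument from $x\in D(A^{s/2})$ (or $D(A)$) to general $x$ in the appropriate interpolation space.
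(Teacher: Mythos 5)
There is a genuine gap, and it sits exactly where you defer the work. Your plan estimates the transformed error pointwise on the imaginary axis and then integrates in $\tau$, which forces you to know the \emph{modulus} of $\widehat b(i\tau)$: you invoke an analytic extension of $\widehat S$ to a sector $\Sigma_\theta$ with $\theta>\pi/2$ and the asymptotics $|\widehat b(i\tau)|\sim|\tau|^{1-\rho}$, claiming these follow from $3$-monotonicity and \eqref{eq:b-smooth}. They do not: Assumption \ref{hyp:b} controls only the \emph{argument} of $\widehat b$ via \eqref{eq:sector} (parabolicity), not its size, and the analytic extension beyond the closed half-plane together with the power-law bounds $|\widehat b(z)|\le C|z|^{1-\rho}$ is precisely the content of the extra Assumption \ref{hyp:b-anal}, which this proposition deliberately does not assume (kernels like $t^{\beta-1}$ times logarithmic factors satisfy Assumption \ref{hyp:b} but not your asymptotics). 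Since the convergence of your integrals $\int(1+|\tau|)^a|\mu(i\tau)|^b\,\dd\tau$ for each $s\in[0,2]$ — which you yourself identify as the main obstacle and leave open — rests on these unjustified bounds, the direct-for-each-$s$ strategy cannot be completed under the stated hypotheses.

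The way around this, which is the paper's route, is to never use any modulus information about $\widehat b$. Prove only the two endpoints and interpolate: the $s=0$ case is exactly Lemma \ref{lem:stab}; for $s=2$, split $e=(v-P_hv)+(P_hv-v_h)=:\rho+\theta$, bound $\int_0^\infty\|\rho\|^2\,\dd t$ in the time domain by the best-approximation property of $P_h$, \eqref{eq:Ritz} and Lemma \ref{lem:stab} applied to $Ax$ (giving $Ch^4\|x\|^2_{2-\frac1\rho}$), and note that in the Laplace-transformed equation for $\theta$ the factor $\widehat b(z)$ multiplies both $A_h\widehat\theta$ and the right-hand side, so only the ratio $z/\widehat b(z)$ enters; its argument is controlled by \eqref{eq:sector}, giving the uniform bound $\|A_hR(ik/\widehat b(ik),A_h)P_h\|\le M_\phi+1$, after which Parseval sends $(R_h-I)\widehat v$ back to the time domain where \eqref{eq:Ritz} and Lemma \ref{lem:stab} apply again. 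Your Wheeler-type splitting through $R_h$ on the transform side is close in spirit to this, but as written it reintroduces pointwise resolvent estimates (and hence modulus bounds on $\widehat b$) that are neither available nor needed.
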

\begin{proof}
It follows from Lemma \ref{lem:stab} that
\begin{equation}\label{eq:stab}
\int_0^t\|(S(s)-S_h(s)P_h)x\|^2\,\dd s\le 2 \int_0^t\|S(s)x\|^2+\|S_h(s)P_hx\|^2\,\dd s \le C\|x\|^2_{-\frac{1}{\rho}}.
\end{equation}
To prove an error estimate of optimal order we set
\begin{multline*}
e(t):=S(t)x-S_h(t)P_hx:=v(t)-v_h(t)\\=
v(t)-P_hv(t)+P_hv(t)-v_h(t):=\rho(t)+\theta(t).
\end{multline*}
For $\rho$, using the best approximation property of $P_h$, we obtain by Lemma \ref{lem:stab} and \eqref{eq:Ritz} that
\begin{equation}\label{eq:rho}
\int_0^\infty\|\rho(t)\|^2\,\dd t\le \int_0^\infty \|(R_h-I)v(t)\|^2\,\dd t\le Ch^4 \|x\|^2_{2-\frac{1}{\rho}}.
\end{equation}
In a standard way one derives an equation for $\theta$ which reads
\begin{equation*}
\left\{
\begin{aligned}
&\dot{\theta}(t)+\int_0^tb(t-s)A_h\theta(s)\,\dd s=A_hP_h\int_0^t b(t-s)(R_h-I)v(s)\,\dd s,~t>0,\\
&\theta(0)=0.
\end{aligned}
\right.
\end{equation*}
Taking Laplace transforms of both sides yields
$$
z \widehat{\theta}(z)+\widehat{b}(z)A_h\widehat{\theta}(z)=A_hP_h(R_h-I)\widehat{v}(z)\widehat{b}(z).
$$
Therefore,
\begin{equation}\label{eq:theta}
\widehat{\theta}(z)=A_hR(\frac{z}{\widehat{b}(z)},A_h)P_h(R_h-I)\widehat{v}(z).
\end{equation}
It can be shown that $\widehat{b}$ extends continuously to $i\mathbb{R}\setminus \{0\}$, see, for example, \cite{MP97}. Therefore, using \eqref{eq:sector}, it follows that $\frac{ik}{\widehat{b}(ik)}\in \Sigma_{\phi}$, $k\in \R\setminus \{0\}$, with $\phi<\pi$. Thus, $\|A_hR(\frac{ik}{\hat{b}(ik)},A_h)P_h\|_{\mathcal{B}(H)}\le (M_\phi+1)$ by \eqref{eq:resolvAh}. Therefore, setting $z=ik$, $k\in \R\setminus \{0\}$, in \eqref{eq:theta} and using
the isometry property of the Fourier transform we obtain, by Lemma \ref{lem:stab} and \eqref{eq:Ritz}, that
\begin{equation}\label{eq:errsp}
\int_0^\infty\|\theta(t)\|^2\,\dd t\le (M_\phi+1) \int_0^\infty \|(R_h-I)v(t)\|^2\,\dd t\le Ch^4 \|x\|^2_{2-\frac{1}{\rho}}.
\end{equation}
Interpolation using \eqref{eq:stab}, \eqref{eq:rho}, and \eqref{eq:errsp} yields
\begin{equation*}
\int_0^{\infty}\|e(t)\|^2\,\dd t \le 2\int_0^\infty\left(\|\rho(t)\|^2+\|\theta(t)\|^2\right)\,\dd t\le Ch^{2s}\|x\|^2_{s-\frac{1}{\rho}},~0\le s\le 2.
\end{equation*}
\end{proof}

Next, using the error analysis from \cite{McLThomee93}, we have the following pointwise smooth data estimate for the spatially semidiscrete scheme.
\begin{proposition}\label{prop:pw}
If $b$ satisfies Assumption \ref{hyp:b} and \eqref{eq:Ritz} holds,  then for every $\epsilon>0$ there is $C=C(T,\epsilon)$ such that
$$
\|S(t)x-S_h(t)P_hx\| \le Ch^{s}\|x\|_{s(1+\epsilon)},~0\le s\le 2,~t\in [0,T].
$$
\end{proposition}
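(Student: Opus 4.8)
The plan is to proceed as in the standard finite element error analysis, splitting
$$
e(t):=S(t)x-S_h(t)P_hx=\rho(t)+\theta(t),\qquad \rho(t):=(I-P_h)v(t),\quad \theta(t):=P_hv(t)-v_h(t),
$$
with $v(t)=S(t)x$ and $v_h(t)=S_h(t)P_hx$, exactly as in the proof of Proposition \ref{prop:semid}. The term $\rho$ is harmless: by the best approximation property of $P_h$, \eqref{eq:Ritz}, and the contractivity \eqref{eq:reg1} of $S$ (which gives $\|v(t)\|_\gamma=\|A^{\gamma/2}S(t)x\|=\|S(t)A^{\gamma/2}x\|\le\|x\|_\gamma$),
$$
\|\rho(t)\|\le\|(R_h-I)v(t)\|\le Ch^\gamma\|v(t)\|_\gamma\le Ch^\gamma\|x\|_\gamma,\qquad 1\le\gamma\le 2 .
$$
So everything reduces to proving $\|\theta(t)\|\le Ch^\gamma\|x\|_\gamma$, $1\le\gamma\le 2$, uniformly in $t>0$; the statement then follows with $s=\gamma$ (using $\|x\|_\gamma\le C\|x\|_{\gamma(1+\epsilon)}$, and interpolation with the trivial bound $\|e(t)\|\le2\|x\|$ to cover $0\le\gamma<1$), in fact with no loss of $\epsilon$ and with $C$ independent of $T$.

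For $\theta$ I would \emph{not} invert a Laplace transform directly, but turn the representation \eqref{eq:theta} into a convolution identity. The function $\theta$ solves $\dot\theta+b\star A_h\theta=g$ with $g=A_hP_h\bigl(b\star(R_h-I)v\bigr)$ and $\theta(0)=0$ (see the proof of Proposition \ref{prop:semid}), hence $\theta=S_h\star g$ by the variation of constants formula \cite[Proposition 1.2]{pruss} for the semidiscrete problem. Since the resolvent equation $\dot S_h=-A_h(b\star S_h)$ reads on $V_h$ as $S_h\star b=b\star S_h=-A_h^{-1}\dot S_h$, associativity of the convolution (with $g=b\star A_hP_h(R_h-I)v$) gives the key identity
$$
\theta(t)=-\int_0^t\dot S_h(t-s)\,P_h(R_h-I)v(s)\,\dd s .
$$
One may also verify this by taking Laplace transforms and comparing with \eqref{eq:theta}. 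For fixed $h$ the space $V_h$ is finite dimensional and each $\dot s_{h,k}$ is bounded with $\dot s_{h,k}(0)=0$, so this convolution is well defined even though $\|\dot S_h(\tau)\|_{\mathcal B(V_h)}$ is only $O(\tau^{-1})$ uniformly in $h$.

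The point of this identity is that the singularity of $\dot S_h$ at the origin is killed by a cancellation, which is exactly what removes any logarithmic/$\epsilon$ loss. Writing
$$
\theta(t)=-\Bigl(\int_0^t\dot S_h(t-s)\,\dd s\Bigr)P_h(R_h-I)v(t)-\int_0^t\dot S_h(t-s)\,P_h(R_h-I)\bigl(v(s)-v(t)\bigr)\,\dd s ,
$$
the first term equals $-\bigl(S_h(t)-I\bigr)P_h(R_h-I)v(t)$ and is bounded by $C\|(R_h-I)v(t)\|\le Ch^\gamma\|x\|_\gamma$ using $\|S_h(t)\|_{\mathcal B(V_h)}\le1$ (the $h$-uniform analogue of \eqref{eq:reg1}). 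For the second term one uses $\|\dot S_h(\tau)\|_{\mathcal B(V_h)}\le C\tau^{-1}$ uniformly in $h$ (which follows, as \eqref{eq:regS2-1} in the continuous case, from \cite[Corollary 3.3]{pruss} together with the $h$-uniform parabolicity of $A_h$ recorded in \eqref{eq:resolvAh}), the estimate \eqref{eq:Ritz} applied to $v(s)-v(t)\in D(A^{\gamma/2})$, and the time-Lipschitz bound obtained by integrating \eqref{eq:regS2-1}:
$$
\|v(s)-v(t)\|_\gamma=\Bigl\|\int_t^s\dot S(r)A^{\gamma/2}x\,\dd r\Bigr\|\le M\,|\log(s/t)|\,\|x\|_\gamma,\qquad \|v(s)-v(t)\|_\gamma\le2\|x\|_\gamma .
$$
Splitting the $s$-integral over $[0,t/2]$ and $[t/2,t]$ and using, on $[t/2,t]$, the bound $|\log(s/t)|\le2(t-s)/t$ to cancel the factor $(t-s)^{-1}$, one obtains $\|\theta(t)\|\le Ch^\gamma\|x\|_\gamma$ with $C$ independent of $t$ and $T$.

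I expect the main obstacle to be the $h$-uniform control of the semidiscrete resolvent family: that $\|S_h(t)\|_{\mathcal B(V_h)}\le1$ and $\|\dot S_h(t)\|_{\mathcal B(V_h)}\le Ct^{-1}$ hold with constants independent of $h$, which rests on $A_h$ inheriting the parabolicity of $A$ uniformly in $h$ (as in \eqref{eq:resolvAh}), together with a clean justification of the convolution identity for $\theta$ — the nonintegrability of $\dot S_h$ at the origin forces one either to pass through the Laplace transform or to argue via the finite-dimensional, componentwise fundamental theorem of calculus \emph{before} invoking the $h$-uniform estimates. If one instead follows the Laplace-transform contour technique of \cite{McLThomee93} literally, these difficulties are traded for a contour estimate that is only logarithmically convergent at high frequency, and it is that route that produces the (harmless) loss of $\epsilon$ and the constant $C(T,\epsilon)$ in the statement.
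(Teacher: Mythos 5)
Your argument is correct in substance, but it takes a genuinely different route from the paper. The paper's proof is essentially a citation: by \cite[Theorem 2.1]{McLThomee93} (valid since Assumption \ref{hyp:b} makes $b$ positive type) one has $\|S(t)x-S_h(t)P_hx\|\le Ch^2\bigl(\|x\|_2+\int_0^t\|\dot S(s)x\|_2\,\dd s\bigr)$, the integral is bounded by $C(T,\epsilon)\|x\|_{2+2\epsilon}$ via Proposition \ref{prop:smooth} (this is exactly where the $\epsilon$ and the $T$-dependence enter, not in a contour estimate as you surmise at the end), and interpolation with $\|S(t)-S_h(t)P_h\|_{\mathcal{B}(H)}\le 2$ finishes. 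You instead reprove the deterministic pointwise estimate from scratch: the splitting $e=\rho+\theta$ as in Proposition \ref{prop:semid}, the convolution identity $\theta=-\dot S_h\star P_h(R_h-I)v$ (which is indeed what \eqref{eq:theta} says on the transform side), and a cancellation argument that trades $\int_0^t\|\dot S(s)x\|_2\,\dd s$ for the difference $v(s)-v(t)$ together with the $O(\tau^{-1})$ bound on $\dot S_h$; your splitting of the integral at $t/2$ and the bound $|\log(s/t)|\le 2(t-s)/t$ are fine, and the final interpolation between $\gamma=0$ and $\gamma=2$ is standard. What your approach buys, if completed, is a sharper statement ($\|x\|_s$ in place of $\|x\|_{s(1+\epsilon)}$, with $C$ independent of $T$); what it costs is precisely the point you flag: the $h$-uniform bounds $\|S_h(t)P_h\|_{\mathcal{B}(H)}\le 1$ and $\|\dot S_h(t)P_h\|_{\mathcal{B}(H)}\le Mt^{-1}$ are nowhere stated in the paper and must be proved. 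The first is immediate from the spectral representation and \eqref{eq:reg1} since the constants in Proposition \ref{prop:sk} do not depend on the eigenvalue; the second does follow by rerunning \cite[Corollary 3.3]{pruss} for $A_h$, because $A_h$ is self-adjoint, positive definite with $\lambda_{h,1}\ge\lambda_1>0$ uniformly in $h$, so the parabolicity constants (cf.\ \eqref{eq:resolvAh}) and the kernel-regularity constants entering Pr\"uss's estimate are $h$-independent — but this is an extra lemma your write-up asserts rather than establishes, and it is the one load-bearing step separating your proof from a complete one. The fixed-$h$ justification of the convolution identity and of $\int_0^t\dot S_h(t-s)\,\dd s=S_h(t)-I$, which you handle via finite dimensionality and $\dot s_{h,k}(0)=0$, is sound.
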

\begin{proof}
As already observed, Assumption \ref{hyp:b} implies that $b$ is a positive definite kernel. Therefore by, \cite[Theorem 2.1]{McLThomee93}, it follows that
$$
\|S(t)x-S_h(t)P_hx\|\le Ch^2\left(\|x\|_2+\int_0^t\|\dot{S}(s)x\|_2\,\dd s\right).
$$
Proposition \ref{prop:smooth} implies that
\begin{equation}\label{eq:sss}
\int_0^t\|\dot{S}(s)x\|_2\,\dd s
=\int_0^t\|A^{-\epsilon}\dot{S}(s)A^{1+\epsilon}x\|\,\dd s\le C(T,\epsilon)\|x\|_{2+2\epsilon}.
\end{equation}
Finally, since $\|S(t)-S_h(t)P_h\|_{\mathcal{B}(H)}\le 2$, interpolation finishes the proof.
\end{proof}


\begin{theorem}
Let $A$ and $Q$ satisfy (\ref{eq:traceA})--(\ref{eq:Q}) and let $b$ be satisfy Assumption \ref{hyp:b}.
If $\E\|u_0\|_{\nu(1+\epsilon)}^2<\infty$ and \eqref{eq:Ritz} holds, then there is $C=C(T,\epsilon,\nu)$ such that
$$
\left(\mathbb{E}\|u(t)-u_h(t)\|^2\right)^{1/2}\le Ch^\nu,~\nu\le\frac{1}{\rho}-\alpha+\kappa,~t\in [0,T].
$$
\end{theorem}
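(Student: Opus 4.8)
The plan is to split the error $u(t)-u_h(t)$ into a deterministic part coming from the initial data and a stochastic part coming from the convolution. Using the variation of constants formulas \eqref{eq:stoconv} and its spatially semidiscrete analogue, write
\begin{equation*}
u(t)-u_h(t) = \big(S(t)u_0 - S_h(t)P_hu_0\big) + \int_0^t \big(S(t-s)-S_h(t-s)P_h\big)\,\dd W^{_Q}(s) =: I(t) + II(t).
\end{equation*}
For the first term I would apply Proposition \ref{prop:pw} pointwise with $s=\nu$: taking expectations, $\left(\E\|I(t)\|^2\right)^{1/2}\le Ch^\nu\left(\E\|u_0\|_{\nu(1+\epsilon)}^2\right)^{1/2}$, which is controlled by the hypothesis $\E\|u_0\|_{\nu(1+\epsilon)}^2<\infty$. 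This is the routine half.

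The stochastic term is the heart of the matter. By It\^o's isometry,
\begin{equation*}
\E\|II(t)\|^2 = \int_0^t \big\| \big(S(s)-S_h(s)P_h\big)Q^{1/2}\big\|_{{\mathcal L}_2(H)}^2\,\dd s = \int_0^t \sum_{i\ge 1}\big\|\big(S(s)-S_h(s)P_h\big)Q^{1/2}e_i\big\|^2\,\dd s.
\end{equation*}
The key idea is to swap the order of the time integral and the sum over $i$ (Tonelli), so that for each fixed $i$ one encounters $\int_0^t \|(S(s)-S_h(s)P_h)Q^{1/2}e_i\|^2\,\dd s$, which is exactly the quantity estimated in Proposition \ref{prop:semid}. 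Extending the $t$-integral to $\infty$ (the integrand is nonnegative) and applying Proposition \ref{prop:semid} with the parameter $s$ there chosen so that the resulting negative power of $A$ matches what is available from $Q$: one wants $2s_\ast = 2\nu$ for the power of $h$, while the norm produced is $\|Q^{1/2}e_i\|_{s_\ast - 1/\rho}^2 = \|A^{(s_\ast-1/\rho)/2}Q^{1/2}e_i\|^2$. Summing over $i$ then gives $\|A^{(\nu-1/\rho)/2}Q^{1/2}\|_{{\mathcal L}_2(H)}^2 = \Tr(A^{\nu-1/\rho}Q)$. Exactly as in the proof of Proposition \ref{prop:existence}, write this as $\Tr(A^{\nu-1/\rho-\kappa}A^\kappa Q)$ and use \eqref{eq:trace_prop2} together with \eqref{eq:Q}; the condition $\nu\le 1/\rho-\alpha+\kappa$ ensures $\nu - 1/\rho - \kappa\le -\alpha$, so that $\Tr(A^{\nu-1/\rho-\kappa})\le \Tr(A^{-\alpha})<\infty$ by \eqref{eq:traceA}, while $\|A^\kappa Q\|_{{\mathcal B}(H)}<\infty$ by \eqref{eq:Q}. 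This yields $\E\|II(t)\|^2\le Ch^{2\nu}$.

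The one point needing care — and the main obstacle — is the constraint $0\le s\le 2$ in Proposition \ref{prop:semid}: we need $\nu\ge 0$ for the estimate to apply. Since $\nu\le 1/\rho-\alpha+\kappa$ and $\kappa\le\alpha$ forces $1/\rho-\alpha+\kappa\le 1/\rho < 1 < 2$, the upper bound $s\le 2$ is automatic; and one restricts attention to $\nu\ge 0$ (negative $\nu$ giving a trivially weaker statement). One should also double-check that $\nu(1+\epsilon)$ stays in the admissible range for Proposition \ref{prop:pw}, i.e. $\nu(1+\epsilon)\le 2$, which holds for $\epsilon$ small since $\nu<1$; choosing $\epsilon$ small enough is harmless as $C$ is allowed to depend on $\epsilon$. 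Combining the two bounds via the triangle inequality in $L^2(\Omega)$ gives $\left(\E\|u(t)-u_h(t)\|^2\right)^{1/2}\le Ch^\nu$ uniformly for $t\in[0,T]$, completing the proof.
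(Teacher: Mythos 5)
Your proposal is correct and takes essentially the same route as the paper: the same splitting via the variation of constants formula, Proposition \ref{prop:pw} for the initial-data term, and It\^o's isometry combined with Proposition \ref{prop:semid}, the identity $\|A^{(\nu-\frac{1}{\rho})/2}Q^{1/2}\|_{\mathcal{L}_2(H)}^2=\Tr(A^{\nu-\frac{1}{\rho}}Q)$ and \eqref{eq:trace_prop2} with \eqref{eq:traceA}--\eqref{eq:Q} for the stochastic convolution. The additional checks you make (that $\nu\ge 0$, $\nu\le 2$ and $\nu(1+\epsilon)$ stays admissible) are sound and only make explicit what the paper leaves implicit.
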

\begin{proof}
By the variation of constants formula,
$$
u(t)-u_h(t)=S(t)x-S_h(t)x+\int_0^t(S(t-s)-S_h(t-s)P_h)\,\dd W^Q(s).
$$
Thus,
\begin{align*}
\E\| u(t)-u_h(t)\|^2 &\le 2 \E\|S(t)x-S_h(t)x\|^2\\
&+2\E\left\|\int_0^t(S(t-s)-S_h(t-s)P_h)\,\dd W^Q(s)\right\|^2:=e_1+e_2.
\end{align*}
It follows from Proposition \ref{prop:pw} that
$$
e_1\le Ch^{2\nu}\E\|u_0\|^2_{\nu(1+\epsilon)}.
$$
To bound $e_2$ we use It\^o's Isometry and Proposition \ref{prop:semid} to obtain
\begin{equation}\label{eq:e2}
\begin{aligned}
e_2&=\mathbb{E}\left\|\int_0^t(S(t-s)-S_h(t-s)P_h)\,\dd W^Q(s)\right\|^2\\
&=\int_0^t\|(S(t-s)-S_h(t-s)P_h)Q^{1/2}\|^2_{{\mathcal L}_2(H)}\,\dd s\\
&=\sum_{k=1}^\infty \int_0^t\|(S(s)-S_h(s)P_h)Q^{1/2}e_k\|^2\,\dd s\\
&\le C h^{2\nu} \sum_{k=1}^{\infty}\|A^{(\nu-\frac{1}{\rho})/2}Q^{1/2}e_k\|^2=Ch^{2\nu}\|A^{(\nu-\frac{1}{\rho})/2}Q^{1/2}\|_{{\mathcal L}_2(H)}^2\\
&=Ch^{2\nu}\Tr(A^{\nu-\frac{1}{\rho}}Q)\le Ch^{2\nu}\Tr(A^{\nu-\frac{1}{\rho}-\kappa})\|A^{\kappa}Q\|.
\end{aligned}
\end{equation}
\end{proof}
\begin{remark}\label{rem:spaceaq}
In particular, if $Q=I$, then $d=1$, $\kappa=0$  and $\alpha>\frac12$ whence $\nu< \frac{1}{\rho}-\frac{1}{2}$. Also note, that it is clear from the proof that instead of (\ref{eq:traceA})--(\ref{eq:Q}) we could assume that $\|A^{(\nu-\frac{1}{\rho})/2}Q^{1/2}\|_{{\mathcal L}_2(H)}<\infty$ and get a convergence rate of order $\nu$. Then, for trace class noise; that is, when $\Tr(Q)<\infty$ we can take
$\nu=\frac{1}{\rho}$.
\end{remark}
We end this section by showing that the above error estimate is optimal in the sense that it corresponds to the spatial regularity of the solution.
\begin{theorem}
Let $A$ and $Q$ satisfy (\ref{eq:traceA})--(\ref{eq:Q}) and let $\nu=\frac{1}{\rho}-\alpha+\kappa$, or, let $\|A^{(\nu-\frac{1}{\rho})/2}Q^{1/2}\|_{\mathcal{L}_2(H)}<\infty$ for some $\nu\ge 0$. If $b$ satisfies Assumption \ref{hyp:b} and $\E\|u_0\|^2_{\nu}<\infty$, then $\E\|u(t)\|^2_{\nu}\le C$ for some $C>0$ for all $t\ge 0$.
\end{theorem}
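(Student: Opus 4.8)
The plan is to read the estimate off the variation of constants formula \eqref{eq:stoconv}. Writing $u(t)=S(t)u_0+\int_0^tS(t-s)\,\dd W^{_Q}(s)$ and using $\|a+b\|^2\le 2\|a\|^2+2\|b\|^2$, I would split
\[
\E\|u(t)\|_\nu^2=\E\|A^{\nu/2}u(t)\|^2\le 2\,\E\|A^{\nu/2}S(t)u_0\|^2+2\,\E\Big\|A^{\nu/2}\int_0^tS(t-s)\,\dd W^{_Q}(s)\Big\|^2=:I_1+I_2,
\]
and bound each term by a constant independent of $t\ge 0$. The structural facts used throughout are that, by the spectral representation \eqref{eq:sk}, $S(t)$ commutes with every fractional power of $A$, and that all constants in Proposition \ref{prop:sk} are uniform in $k$.

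For $I_1$ I would note that on $D(A^{\nu/2})$ we have $A^{\nu/2}S(t)u_0=S(t)A^{\nu/2}u_0$, and $\|S(t)\|_{{\mathcal B}(H)}\le 1$ by \eqref{eq:reg1}, so $\|A^{\nu/2}S(t)u_0\|\le\|u_0\|_\nu$ pathwise; hence $I_1\le 2\,\E\|u_0\|_\nu^2<\infty$. For $I_2$ I would repeat the computation in the proof of Proposition \ref{prop:existence} with the extra weight $A^{\nu/2}$: by It\^o's isometry, the change of variables $s\mapsto t-s$, and Parseval's identity,
\[
I_2=2\int_0^t\|A^{\nu/2}S(s)Q^{1/2}\|^2_{{\mathcal L}_2(H)}\,\dd s=2\sum_{i,j\ge 1}\Big(\int_0^t\lambda_j^\nu s_j^2(s)\,\dd s\Big)(Q^{1/2}e_i,e_j)^2,
\]
and then use $\int_0^ts_j^2(s)\,\dd s\le\|s_j\|_{L^\infty(\R_+)}\|s_j\|_{L^1(\R_+)}\le C_0\lambda_j^{-1/\rho}$ from \eqref{eq:reg1} and \eqref{eq:reg4} to obtain $I_2\le 2C_0\|A^{(\nu-1/\rho)/2}Q^{1/2}\|^2_{{\mathcal L}_2(H)}$, again uniformly in $t$.

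It then remains to see that this last Hilbert--Schmidt norm is finite. Under the second hypothesis it is assumed outright. Under the first hypothesis $\nu-\frac{1}{\rho}-\kappa=-\alpha$, so by \eqref{eq:trace_prop1}--\eqref{eq:trace_prop2},
\[
\|A^{(\nu-1/\rho)/2}Q^{1/2}\|^2_{{\mathcal L}_2(H)}=\Tr(A^{\nu-1/\rho}Q)=\Tr(A^{-\alpha}A^{\kappa}Q)\le\Tr(A^{-\alpha})\,\|A^{\kappa}Q\|_{{\mathcal B}(H)}<\infty
\]
by \eqref{eq:traceA} and \eqref{eq:Q}. Combining the bounds on $I_1$ and $I_2$ gives $\E\|u(t)\|_\nu^2\le C$ with $C$ independent of $t\ge 0$; in particular $u(t)\in D(A^{\nu/2})$ almost surely, which retroactively justifies the manipulations. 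I do not expect a genuine obstacle here: the whole argument is the existence estimate of Proposition \ref{prop:existence} carried out at the level of $\|\cdot\|_\nu$ instead of $\|\cdot\|$, and the uniformity in $t$ is automatic because the bounds in Proposition \ref{prop:sk} do not depend on the eigenvalue index. The only point deserving a word of care is the almost sure membership of $u(t)$ in $D(A^{\nu/2})$, which one obtains a posteriori once $I_1$ and $I_2$ are shown finite.
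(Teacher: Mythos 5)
Your proposal is correct and follows essentially the same route as the paper: the same splitting via It\^o's isometry and $\|S(t)\|_{\mathcal{B}(H)}\le 1$, the same spectral computation reducing the stochastic term to $\int_0^t s_j^2(s)\,\dd s\le \|s_j\|_{L^\infty(\R_+)}\|s_j\|_{L^1(\R_+)}\le C_0\lambda_j^{-1/\rho}$ via Proposition \ref{prop:sk}, and the same final bound $\|A^{(\nu-\frac{1}{\rho})/2}Q^{1/2}\|^2_{\mathcal{L}_2(H)}\le \Tr(A^{\nu-\frac{1}{\rho}-\kappa})\|A^{\kappa}Q\|_{\mathcal{B}(H)}$ using \eqref{eq:trace_prop2} (with $\nu-\frac{1}{\rho}-\kappa=-\alpha$ under the first hypothesis). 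No gaps; your remark on the a posteriori justification of $u(t)\in D(A^{\nu/2})$ is a fine point the paper leaves implicit.
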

\begin{proof}
It follows by It\^{o}'s Isometry and the fact that $\|S(t)\|\le 1$ that
$$
\E\|u(t)\|_{\nu}^2\le 2\E\|u_0\|_\nu^2+2\int_0^t\|A^{\nu/2}S(s)Q^{1/2}\|^2_{{\mathcal L}_2(H)}\dd s.
$$
Let $(e_k,\lambda_k)$ be the eigenpairs of $A$. Then, by monotone convergence, the self-adjointness of $A$ and $S$, and Proposition \ref{prop:sk}, it follows that
\begin{align*}
&\int_0^t\|A^{\nu/2}S(s)Q^{1/2}\|^2_{{\mathcal L}_2(H)}\dd s=\sum_{k=1}^{\infty}\int_0^t\|A^{\nu/2}S(s)Q^{1/2}e_k\|^2\,\dd s \\
&=\sum_{j,k=1}^{\infty}\int_0^t (A^{\nu/2}S(s)Q^{1/2}e_k,e_j)^2\,\dd s = \sum_{j,k=1}^{\infty}\int_0^t (Q^{1/2}e_k,S(s)A^{\nu/2}e_j)^2\,\dd s\\
&=\sum_{j,k=1}^{\infty}(Q^{1/2}e_k,\lambda_j^{n/2}e_j)^2 \int_0^t s^2_j(s)\,\dd s\\
&\le \sum_{j,k=1}^{\infty}(Q^{1/2}e_k,\lambda_j^{\nu/2}e_j)^2
\|s_j\|_{L^{\infty}(\R_+)}\|s_j\|_{L^1(\R_+)} \\
&\le C_0 \sum_{j,k=1}^{\infty}(Q^{1/2}e_k,\lambda_j^{\nu/2}e_j)^2\lambda_j ^{-1/\rho}=C_0\sum_{j,k=1}^{\infty}(Q^{1/2}e_k,\lambda_j^{\nu/2-\frac{1}{2\rho}}e_j)^2\\
&=C_0\|A^{(\nu-\frac{1}{\rho})/2}Q^{1/2}\|^2_{{\mathcal L}_2(H)}\le C_0\Tr(A^{\nu-\frac{1}{\rho}-\kappa})\|A^{\kappa}Q\|^2_{\mathcal{B}(H)}.
\end{align*}
\end{proof}

\section{Time discretization} \label{sec:time}

Time discretization is achieved via a classical implicit Euler scheme and, concerning the convolution in time, via a
quadrature rule based on \eqref{eq:omega}. Let $\Delta t >0$ and
we set $t_n = n \, \Delta t$ for any integer $n\ge 0$.
We seek for an approximation
$u_n$ of $u(t_n)$ defined by the recurrence
\begin{equation} \label{eq:time_scheme}
u_n - u_{n-1} + \Delta t \left ( \sum_{k=1}^{n} \omega_{n-k}\,  A u_k \right ) = W^{_Q}(t_n) - W^{_Q}(t_{n-1}), \quad n\geq 1,
\end{equation}
\noindent with initial condition $u_0=u(0)$. We recall that the coefficients $\{\omega_k\}_{k\geq 0}$ of the quadrature are chosen such that
\begin{equation}\label{eq:omegab}
\sum_{k= 0}^{+\infty} \omega_k z^k  = \widehat b \left ( \frac{1-z}{\Delta t} \right ), \quad |z|<1.
\end{equation}
Let us note that thanks to \cite[estimate (3.6)]{MP97}, we have the lower bound for $\omega_0$:
\begin{equation}\label{eq:omega0}
\omega_0 = \widehat b(1/\Delta t) \geq c \Delta t^{\rho-1},~ \Delta t < 1,
\end{equation}
\noindent where $\rho\in(1,2)$ is defined in \eqref{eq:sector}.

 In the sequel we derive a discrete mild formulation (variation of constants formula) for \eqref{eq:time_scheme}. This formulation can not be made easily explicit as a function of the operators $A$, $Q$ and the kernel $b$, because of the memory effect in the drift. First consider the deterministic algorithm
\begin{equation} \label{eq:time_scheme det}
v_n - v_{n-1} + \Delta t \left ( \sum_{k=1}^{n} \omega_{n-k}\,  A v_k \right ) = 0, \quad n\geq 1; \quad v_0=x.
\end{equation}
Taking the $z$-transform, using the notation $$\hat{V}(z)=\sum_{k=0}^\infty v_kz^k\text{ and }\hat{\omega}(z)=\sum_{k=0}^{\infty}\omega_kz^k, $$   we get
$$
\hat{V}(z)-x-z\hat{V}(z)+\Delta t \hat{\omega}(z)A(\hat{V}(z)-x)=0.
$$
Thus,
$$
\hat{V}(z)=(I+\Delta t \hat{\omega}(z)A)((1-z)I+\Delta t \hat{\omega}(z)A)^{-1}x:=\hat{B}(z)x,
$$
where
$$
\hat{B}(z)x=\sum_{k=0}^\infty B_kx z^k.
$$
This means that $v_k=B_kx$, $k=0,1,...$ Note that $B_0=\hat{B}(0)=I.$ For the stochastic equation it will be useful to rewrite $\hat{B}(z)x$ as
\begin{equation}\label{eq:rew}
\begin{aligned}
\hat{B}(z)x=&((1-z)I+\hat{\omega}(z)\Delta t A)^{-1}(I+\hat{\omega}(z)\Delta t A)x\\
&=((1-z)I+\hat{\omega}(z)\Delta t A)^{-1}x+\hat{\omega}(z)\Delta t A((1-z)I+\hat{\omega}(z)\Delta t A)^{-1}x\\
&=((1-z)I+\hat{\omega}(z)\Delta t A)^{-1}x-(1-z)((1-z)I+\hat{\omega}(z)\Delta t A)^{-1}x+x\\
&=(z((1-z)I+\hat{\omega}(z)\Delta t A)^{-1}+I)x.
\end{aligned}
\end{equation}
Now, we consider the stochastic case \eqref{eq:time_scheme} which reads, after taking the $z$-transform, rearranging, and using the notation $w_n=W^Q(t_n)-W^Q(t_{n-1})$ for $n\ge 1$, $w_0=0$, and
$$
\hat{w}(z)=\sum_{k=0}^{\infty}w_kz^n\text{ and }\hat{U}(z)=\sum_{k=0}^{\infty}u_kz^k,
$$
as
\begin{align*}
\hat{U}(z)&=\hat{B}(z)x+((1-z)I+\hat{\omega}(z)\Delta t A)^{-1}\hat{w}(z)\\
&=\hat{B}(z)x+\frac{\hat{B}(z)-I}{z}\hat{w}(z)=\hat{B}(z)x+\hat{B}(z)\frac{\hat{w}(z)}{z}-\frac{1}{z}\hat{w}(z),
\end{align*}
where we also used \eqref{eq:rew} to rewrite the stochastic term in the previous calculation. This yields the discrete variation of constants formula, taking into account that $w_0=0$ and that $B_0=I$,
\begin{equation}\label{eq:distvar}
u_n=B_nx+\sum_{k=0}^nB_{n-k}w_{k+1}-w_{n+1}=B_nx+\sum_{k=0}^{n-1}B_{n-k}w_{k+1}.
\end{equation}
The importance of this formula lies in the fact that it connects the deterministic case to the stochastic case with the deterministic time-discrete solution operators  $B_n$ explicitly appearing in the formula.
\subsection{Deterministic estimates: stability and smoothing.} The next theorem is interesting in its own right. It shows that Lubich's convolution quadrature based on the Backward Euler scheme have a remarkable qualitative property: it preserves the $L^p$-norm of the orbits of the solution. The result can be viewed as a generalization of the ones in \cite{HaNo2011}; in particular, it removes the additional technical frequency condition in \cite[Theorem 2]{HaNo2011}.  The proof uses a representation similar to that in \cite{Pal}. We also note that the statement holds in Banach spaces as well since the proof does not use Hilbert space techniques.
\begin{theorem}\label{thm:lc}
If the resolvent family $\{S(t)\}_{t\ge 0}$ of \eqref{eq:detvolterra} satisfies $$S(\cdot)x\in L^p((0,\infty);H)$$ for some $1\le p \le \infty$ and $x\in H$, then
$$
\Delta t \sum_{k=1}^n\|B_kx\|^p\le \int_0^{\infty}\|S(t)x\|^p\,\dd t,\quad 1\le p<\infty,
$$
and
$$
\sup_{k\ge 1}\|B_kx\|\le \|S(\cdot)x\|_{L^{\infty}(\R_+)}.
$$
\end{theorem}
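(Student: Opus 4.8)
The plan is to derive an explicit integral representation of the deterministic solution operators $B_k$ in terms of the continuous resolvent $\{S(t)\}_{t\ge 0}$ — of the same flavour as the representation used in \cite{Pal} — and then to recognise each $B_k x$ as a probabilistic average of the family $\{S(t)x\}_{t>0}$, from which both inequalities follow by Jensen's inequality. First I would record the Laplace transform of $S$: since $S$ solves $\dot S+A(b\star S)=0$ with $S(0)=I$ (read off from \eqref{eq:sk}--\eqref{eq:skeq}), and $S(\cdot)x$ is subexponential whenever it lies in some $L^p$, taking Laplace transforms gives $\widehat S(\lambda)=(\lambda I+\widehat b(\lambda)A)^{-1}$ for $\Re\lambda>0$ (the operator on the right is invertible because $\Re\widehat b(\lambda)\ge0$ under Assumption \ref{hyp:b} and $A$ is positive self-adjoint). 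Comparing with the rewriting \eqref{eq:rew} of $\widehat B(z)$ and substituting $\lambda=(1-z)/\Delta t$ (which has positive real part for $|z|<1$), the identity $\widehat\omega(z)=\widehat b((1-z)/\Delta t)$ from \eqref{eq:omegab} yields $((1-z)I+\widehat\omega(z)\Delta t A)^{-1}=\Delta t^{-1}\widehat S((1-z)/\Delta t)$, hence
\[
\widehat B(z)=I+\frac{z}{\Delta t}\,\widehat S\!\left(\frac{1-z}{\Delta t}\right),\qquad |z|<1 .
\]

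Next I would expand $\widehat S((1-z)/\Delta t)x=\int_0^\infty e^{-(1-z)t/\Delta t}S(t)x\,\dd t$, write $e^{zt/\Delta t}=\sum_{m\ge0}(zt/\Delta t)^m/m!$, interchange sum and integral (justified since $\int_0^\infty\|S(t)x\|e^{-(1-|z|)t/\Delta t}\,\dd t<\infty$), and match powers of $z$ with $\widehat B(z)x-x=\sum_{k\ge1}B_kx\,z^k$ (recall $B_0=I$). This gives, for every $k\ge1$,
\[
B_kx=\frac{1}{\Delta t}\int_0^\infty S(t)x\,e^{-t/\Delta t}\,\frac{(t/\Delta t)^{k-1}}{(k-1)!}\,\dd t=\int_0^\infty S(\Delta t\,\sigma)x\,e^{-\sigma}\frac{\sigma^{k-1}}{(k-1)!}\,\dd\sigma,
\]
the second equality by the substitution $t=\Delta t\,\sigma$. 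Since $\int_0^\infty e^{-\sigma}\sigma^{k-1}/(k-1)!\,\dd\sigma=1$, the measure $\mu_k(\dd\sigma):=e^{-\sigma}\sigma^{k-1}/(k-1)!\,\dd\sigma$ (the $\Gamma(k,1)$ law) is a probability measure, so $B_kx=\int_0^\infty S(\Delta t\,\sigma)x\,\mu_k(\dd\sigma)$ is a probabilistic average of the orbit of $S(\cdot)x$.

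From here both estimates are immediate. For $p=\infty$, $\|B_kx\|\le\int_0^\infty\|S(\Delta t\,\sigma)x\|\,\mu_k(\dd\sigma)\le\|S(\cdot)x\|_{L^\infty(\R_+)}$. For $1\le p<\infty$, Jensen's inequality applied to $r\mapsto r^p$ gives $\|B_kx\|^p\le\int_0^\infty\|S(\Delta t\,\sigma)x\|^p\,\mu_k(\dd\sigma)$; summing over $k=1,\dots,n$, interchanging the (finite) sum with the integral, and using $\sum_{k=1}^n e^{-\sigma}\sigma^{k-1}/(k-1)!=e^{-\sigma}\sum_{j=0}^{n-1}\sigma^j/j!\le1$, we get $\Delta t\sum_{k=1}^n\|B_kx\|^p\le\Delta t\int_0^\infty\|S(\Delta t\,\sigma)x\|^p\,\dd\sigma=\int_0^\infty\|S(t)x\|^p\,\dd t$ after undoing the substitution.

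The only genuinely delicate point I anticipate is the term-by-term identification of the coefficients $B_k$ of the operator-valued analytic function $\widehat B(z)$ on $|z|<1$, i.e.\ the passage from \eqref{eq:rew} to the displayed formula for $B_kx$; this rests on the resolvent identity $\widehat S(\lambda)=(\lambda I+\widehat b(\lambda)A)^{-1}$ (which uses Assumption \ref{hyp:b} only through the well-posedness already established in Section \ref{sec:prel}) and on uniqueness of power-series coefficients together with the Fubini bound above. Since none of this uses Hilbert-space structure, the argument indeed goes through verbatim in a Banach space, as claimed before the statement.
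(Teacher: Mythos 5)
Your proof is correct and follows essentially the same route as the paper: the identification $\widehat B(z)=I+\frac{z}{\Delta t}\widehat S\bigl(\frac{1-z}{\Delta t}\bigr)$ via \eqref{eq:rew} and \eqref{eq:omegab}, the resulting representation $B_kx=\int_0^\infty S(\Delta t\,\sigma)x\,e^{-\sigma}\sigma^{k-1}/(k-1)!\,\dd\sigma$, and then Jensen's inequality with the Gamma densities summing to at most one. The only differences are cosmetic (you bound $\sum_{k=1}^n f_k\le 1$ directly where the paper uses $\sup_t\sum_{k\ge1}f_k(t)=1$, and you spell out the Fubini justification), so no further changes are needed.
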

\begin{proof}
The Laplace Transform of $\{S(t)\}_{t\ge 0}$ is given by
$$
\hat{S}(z)x=(zI+\hat{b}(z)A)^{-1}x.
$$
Using \eqref{eq:omegab} and \eqref{eq:rew} we see that the $z$-transform $\hat{B}x$ of $\{B_nx\}_n$ is given by
\begin{equation*}
\begin{aligned}
\hat{B}(z)&=z\frac{1}{\Delta t}\hat{S}(\frac{1-z}{\Delta t})x +x=x+z\int_0^{\infty}S(\Delta t s)e^{-s}e^{zs}\,\dd s\\
&=x+\sum_{k=1}^\infty z^k\int_0^\infty S(\Delta t s)x\frac{e^{-s}s^{k-1}}{(k-1)!}\,\dd s.
\end{aligned}
\end{equation*}
Therefore, we conclude that $B_0=I$ and that
\begin{equation}\label{eq:bn}
B_kx=\int_0^\infty S(\Delta t s)x\frac{e^{-s}s^{k-1}}{(k-1)!}\,\dd s\text{ for }k\ge 1.
\end{equation}
Let
$$
f_k(s):=\frac{e^{-s}s^{k-1}}{(k-1)!},\quad k\ge 1.
$$
Then $f_k\ge 0$, $\|f_k\|_{L^1(\R_+)}=1$. Therefore, if $p=\infty$, we immediately obtain from \eqref{eq:bn} that
$$
\sup_{k\ge 1}\|B_kx\|\le \|S(\cdot)x\|_{L^{\infty}(\R_+)}.
$$
If $1\le p<\infty$, then we use Jensen's inequality in \eqref{eq:bn}, and have
\begin{multline*}
\Delta t \sum_{k=1}^n\|B_kx\|^p\le \sum_{k=1}^n\Delta t\int_0^{\infty} \|S(\Delta t s)x\|^pf_k(s)\,\dd s\\
=\int_0^{\infty} \|S( t)x\|^p\sum_{k=1}^nf_k(\frac{t}{\Delta t})\,\dd t
 \le \sup_{t> 0} \sum_{k=1}^{\infty}f_k(t)\int_0^{\infty} \|S( t)x\|^p\,\dd t.
\end{multline*}
Finally, noticing that $\sum_{n=1}^\infty f_n\equiv 1$ completes the proof.
\end{proof}
Theorem \ref{thm:lc} has the following important corollary on the smoothing and stability of the time discretization scheme in case $b$ satisfies Assumption 1.
\begin{corollary}\label{cor:smooth}
If $b$ satisfies Assumption \ref{hyp:b}, then, for all $x\in H$,
$$
\sup_{k\ge 1}\|B_kx\|\le \|x\|\text{ and }\Delta t \sum_{k=1}^n\|B_kx\|^2\le C\|x\|^2_{-\frac{1}{\rho}},~n\ge 1.
$$
\end{corollary}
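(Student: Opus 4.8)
The plan is to read off both estimates directly from Theorem \ref{thm:lc}, feeding in the two basic facts about the resolvent family $\{S(t)\}_{t\ge 0}$ that Assumption \ref{hyp:b} provides: its uniform boundedness by $1$ and its $L^2$-in-time decay quantified in Lemma \ref{lem:stab}. No new argument beyond invoking these results should be required, since the representation \eqref{eq:bn} of $B_k$ as an average of $S$ against a probability density already does all the work inside Theorem \ref{thm:lc}.

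For the sup-bound I would first observe that, by the spectral representation \eqref{eq:sk} together with \eqref{eq:reg1},
$$
\|S(t)x\|^2=\sum_{k\ge 1}s_k(t)^2(x,e_k)^2\le \sum_{k\ge 1}(x,e_k)^2=\|x\|^2,\quad t\ge 0,
$$
so that $S(\cdot)x\in L^\infty(\R_+;H)$ with $\|S(\cdot)x\|_{L^\infty(\R_+)}\le\|x\|$. Theorem \ref{thm:lc} with $p=\infty$ then gives $\sup_{k\ge 1}\|B_kx\|\le\|S(\cdot)x\|_{L^\infty(\R_+)}\le\|x\|$, which is the first claimed estimate.

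For the $l^2$-bound I would note that Lemma \ref{lem:stab}, in which the constant does not depend on $t$, yields $\int_0^\infty\|S(s)x\|^2\,\dd s\le C\|x\|^2_{-\frac{1}{\rho}}<\infty$, so $S(\cdot)x\in L^2(\R_+;H)$ and Theorem \ref{thm:lc} applies with $p=2$. It then gives
$$
\Delta t\sum_{k=1}^n\|B_kx\|^2\le\int_0^\infty\|S(t)x\|^2\,\dd t\le C\|x\|^2_{-\frac{1}{\rho}},\quad n\ge 1,
$$
which is the second estimate, with the same constant $C$ as in Lemma \ref{lem:stab}.

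Since the substantive content has already been established in Theorem \ref{thm:lc} and in Lemma \ref{lem:stab} (whose proof rests on \eqref{eq:reg1} and \eqref{eq:reg4}), there is essentially no obstacle here. The only point one has to be careful about is that the hypothesis $S(\cdot)x\in L^p((0,\infty);H)$ of Theorem \ref{thm:lc} is verified for the two exponents $p=\infty$ and $p=2$ under Assumption \ref{hyp:b}, and this is exactly what the two displayed bounds above check; both bounds in the corollary then transfer the continuous-time estimates to the discrete orbit $\{B_kx\}_k$ uniformly in $\Delta t$.
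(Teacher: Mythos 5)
Your argument is correct and is essentially identical to the paper's proof, which likewise deduces the corollary from Theorem \ref{thm:lc} combined with Lemma \ref{lem:stab} and the bound $\|S(t)\|_{\mathcal{B}(H)}\le 1$. The only extra content you add is the explicit verification of the hypotheses of Theorem \ref{thm:lc} for $p=\infty$ and $p=2$, which the paper leaves implicit.
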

\begin{proof}
The statement follows from Theorem \ref{thm:lc} together with Lemma \ref{lem:stab} and the fact that $\|S(t)\|\le 1$ for $t\ge 0$.
\end{proof}
Finally we will need a H\"older type estimate on the resolvent family $\{S(t)\}_{t\ge 0}$.
\begin{lemma}\label{lem:hol}
If $b$ satisfies Assumption \ref{hyp:b}, then there is $C=C(T,\gamma)>0$ such that
$$
\left(\sum_{k=1}^{n} \int_{t_{k-1}}^{t_{k}} \| (S(t_n - s) - S(t_n-t_{k-1}) ) x \|^2 \, \dd s\right)^{1/2}\le C\Delta t^\gamma \|x\|_{s-\frac{1}{\rho}},\,\, n\Delta t =T,
$$
for all $\gamma<\frac{\rho s}{2}$ where $0< s\le \frac{1}{\rho}$.
\end{lemma}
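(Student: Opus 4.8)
The plan is to diagonalise $S(t)$ via the spectral representation \eqref{eq:sk}, reduce the statement to a scalar estimate on the functions $s_j$, and then interpolate between two crude $L^1$-bounds furnished by Proposition \ref{prop:sk}. By \eqref{eq:sk}, Parseval's identity, and Tonelli's theorem,
$$\sum_{k=1}^{n} \int_{t_{k-1}}^{t_{k}} \| (S(t_n - s) - S(t_n-t_{k-1}) ) x \|^2 \, \dd s = \sum_{j\ge 1}(x,e_j)^2 \sum_{k=1}^{n} \int_{t_{k-1}}^{t_{k}} \bigl( s_j(t_n - s) - s_j(t_n-t_{k-1}) \bigr)^2 \, \dd s .$$
After the substitution $\tau = t_n - s$ and the reindexing $m = n-k+1$, which maps $\{1,\dots,n\}$ onto itself, the inner sum equals $\Sigma_j := \sum_{m=1}^{n}\int_{t_{m-1}}^{t_m}\bigl(s_j(\tau)-s_j(t_m)\bigr)^2\,\dd\tau$. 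It therefore suffices to prove that $\Sigma_j \le C(T,\gamma)\,\Delta t^{2\gamma}\,\lambda_j^{s-1/\rho}$ for each $j$, since summing against $(x,e_j)^2$ then recovers $\|x\|^2_{s-1/\rho} = \sum_j\lambda_j^{s-1/\rho}(x,e_j)^2$, and a square root gives the claim.

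The key point is that, because $\|s_j\|_{L^\infty(\R_+)}\le 1$ by \eqref{eq:reg1}, for $\tau\in[t_{m-1},t_m]$ we may lower the power of the increment:
$$\bigl(s_j(\tau)-s_j(t_m)\bigr)^2 \le 2\,\bigl|s_j(\tau)-s_j(t_m)\bigr| \le 2\int_\tau^{t_m}|\dot{s_j}(r)|\,\dd r .$$
Integrating in $\tau$ over $[t_{m-1},t_m]$ and exchanging the order of integration gives $\int_{t_{m-1}}^{t_m}\bigl(s_j(\tau)-s_j(t_m)\bigr)^2\,\dd\tau \le 2\int_{t_{m-1}}^{t_m}(r-t_{m-1})\,|\dot{s_j}(r)|\,\dd r$. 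Summing over $m=1,\dots,n$ and estimating the weight $r-t_{m-1}$ once by $r$ and once by $\Delta t$ produces the two bounds
$$\Sigma_j \;\le\; 2\int_0^\infty r\,|\dot{s_j}(r)|\,\dd r \;\le\; 2C_0\,\lambda_j^{-1/\rho} \qquad\text{and}\qquad \Sigma_j \;\le\; 2\Delta t\int_0^\infty|\dot{s_j}(r)|\,\dd r \;\le\; 2C_0\,\Delta t ,$$
using \eqref{eq:reg3} and \eqref{eq:reg2} respectively.

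Since $\Sigma_j$ is dominated by both right-hand sides, it is dominated by their geometric mean, $\Sigma_j \le 2C_0\,\lambda_j^{-(1-\theta)/\rho}\,\Delta t^{\theta}$ for every $\theta\in[0,1]$. Choosing $\theta = \rho s$, which lies in $(0,1]$ because $0<s\le 1/\rho$, makes the power of $\lambda_j$ equal to $s-1/\rho$; moreover $\rho s - 2\gamma>0$ and $\Delta t = T/n\le T$, so $\Delta t^{\rho s} = \Delta t^{2\gamma}\,\Delta t^{\rho s-2\gamma}\le T^{\rho s-2\gamma}\,\Delta t^{2\gamma}$. This gives $\Sigma_j\le 2C_0\,T^{\rho s-2\gamma}\,\lambda_j^{s-1/\rho}\,\Delta t^{2\gamma}$, and the proof is finished by summing against $(x,e_j)^2$ and taking a square root.

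I do not expect a real obstacle; the only subtlety is to lower the square $(s_j(\tau)-s_j(t_m))^2$ to a single power of the increment (using boundedness of $s_j$) \emph{before} integrating, so that the exchange of integrals produces the weight $r-t_{m-1}$. Bounding this weight by $r$, rather than by the obvious $\Delta t$, is what preserves the full $\lambda_j^{-1/\rho}$ decay — including the contribution of the first subinterval, where $s_j$ itself need not be small — and this is precisely what allows the interpolation to reach the exponent $\gamma<\rho s/2$. Alternatively, one can collapse the two bounds and the interpolation into one step by writing $r-t_{m-1}\le \Delta t^{1-\sigma}r^{\sigma}$ with $\sigma=1-\rho s$ and invoking the weighted estimate $\int_0^\infty r^{\sigma}|\dot{s_j}(r)|\,\dd r\le C_0\lambda_j^{-\sigma/\rho}$, which is Hölder's inequality between \eqref{eq:reg2} and \eqref{eq:reg3}.
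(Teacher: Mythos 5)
Your proof is correct, and in fact it proves slightly more than the lemma asks for: your term-wise geometric-mean interpolation yields the endpoint exponent $\gamma=\rho s/2$, whereas the statement only requires $\gamma<\rho s/2$. Half of your argument coincides with the paper's: the paper also expands spectrally, lowers the square to a single power of the increment using \eqref{eq:reg1}, and converts the increment into an integral of $|\dot s_i|$ to obtain the bound $C\Delta t\,\|x\|^2$ via \eqref{eq:reg2}. The difference is in the other half of the interpolation pair and in how the interpolation is performed. The paper bounds the same quantity, without any power of $\Delta t$, by $C\|x\|^2_{\epsilon-\frac{1}{\rho}}$ for arbitrary $\epsilon>0$, using the triangle inequality together with the smoothing estimate \eqref{eq:regS1} (with index $\frac{1}{\rho}-\epsilon$) and the stability estimate of Lemma \ref{lem:stab}; the $\epsilon$-loss there (needed for the Riemann sum $\Delta t\sum_k (t_n-t_{k-1})^{-s'\rho}$ to converge) is the source of the strict inequality $\gamma<\rho s/2$, and the two estimates are then combined by interpolation of norms. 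You instead keep the difference structure, exchange the order of integration to produce the weight $r-t_{m-1}$, and bound it once by $\Delta t$ and once by $r$, the latter invoking the weighted bound \eqref{eq:reg3}; this preserves the full $\lambda_j^{-1/\rho}$ decay with no $\epsilon$-loss and lets you interpolate elementarily, eigenvalue by eigenvalue, between two scalar bounds. The payoff of your route is a sharper and more self-contained argument (no appeal to \eqref{eq:regS1}, Lemma \ref{lem:stab}, or abstract interpolation); the paper's route reuses machinery it has already set up and is the pattern it repeats in Corollary \ref{cor:tee}, where the analogue of your weighted trick is not available.
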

\begin{proof}
It follows from \eqref{eq:regS1}, with $s=\frac{1}{\rho}-\epsilon$, and Lemma \ref{lem:stab} that there is a constant $C=C(\epsilon, T)$ such that, for $0<\epsilon\le \frac{1}{\rho}$,
$$
\left(\sum_{k=1}^{n} \int_{t_{k-1}}^{t_{k}} \| (S(t_n - s) - S(t_n-t_{k-1}) ) x \|^2 \, \dd s\right)^{1/2}\le C \|x\|_{\epsilon-\frac{1}{\rho}},\,n\Delta t=t_n =T.
$$
Next, it follows from Proposition \ref{prop:sk} that
\begin{align*}
\sum_{k=1}^n\int_{t_{k-1}}^{t_{k}} &\| (S(t_n - s) - S(t_n-t_{k-1}) ) x \|^2\,\dd s\\
&=\sum_{i=1}^{\infty}( x, e_i) ^2\sum_{k=1}^n\int_{t_{k-1}}^{t_k}(s_i(t_n - s) - s_i(t_n-t_{k-1})^2\,\dd s\\
&\le 2 \sum_{i=1}^{\infty}( x, e_i) ^2\sum_{k=1}^n\int_{t_{k-1}}^{t_k}|s_i(t_n - s) - s_i(t_n-t_{k-1})|\,\dd s\\
&\le 2\sum_{i=1}^{\infty}( x, e_i) ^2\sum_{k=1}^n\int_{t_{k-1}}^{t_k}\int_{t_n-s}^{t_n-t_{k-1}}|\dot{s}_i(t)|\,\dd t\,\dd s\\
&\le 2\sum_{i=1}^{\infty}( x, e_i) ^2\sum_{k=1}^n\int_{t_{k-1}}^{t_k}\int_{t_n-t_k}^{t_n-t_{k-1}}|\dot{s}_i(t)|\,\dd t\,\dd s\\
&\le 2 \Delta t \|x\|^2\sup_{i\ge 1}\|\dot{s}_i\|_{L^1(\R_+)}\le C\Delta t \|x\|^2.
\end{align*}
Finally, interpolation gives the desired result.
\end{proof}
\subsection{Deterministic estimates: convergence rates}\label{sub:dete}
In order to give an error estimate of optimal order with no initial regularity for the time discretization of the deterministic problem we  have to impose another assumption on $b$. This kind of assumption; that is, the existence of an analytic extension of $\hat{b}$ to a sector beyond the left halfplane, is fairly standard in the existing deterministic literature, see, for example, \cite{eggermont1992,lubich88,lubich04,Lubich_et_al1996},  but it clearly represents a major restriction compared to Assumption \ref{hyp:b}. We note that this additional assumption is not needed neither for the spatial error estimates with smooth initial data, and hence for the space-semidiscretization of the stochastic equation, nor the for the stability results for the time discretization in the previous subsection.
\begin{hypothesis}\label{hyp:b-anal}
The Laplace transform $\widehat{b}$ of $b$ can be extended to an analytic function in a sector $\Sigma_\theta$ with $\theta>\pi/2$ and $|\widehat{b}^{(k)}(z)|\le C|z|^{1-\rho-k}$, $k=0,1$, $z\in \Sigma_\theta$.
\end{hypothesis}
Note that Assumption \ref{hyp:b-anal} implies that
\begin{equation}\label{eq:omega1}
\omega_0 = \widehat b(1/\Delta t) \leq C \Delta t^{\rho-1},~ \Delta t < 1.
\end{equation}
An important example of a family of kernels satisfying both Assumptions \ref{hyp:b} and \ref{hyp:b-anal} is given by $b(t)=Ct^{\beta-1}e^{-\eta t}$, $0<\beta<1$ and $\eta\ge 0$.

Assumptions \ref{hyp:b} and \ref{hyp:b-anal} allows us to use the following deterministic nonsmooth data estimate \cite[Theorem 3.2]{Lubich_et_al1996}.
\begin{proposition}\label{lem:LST96}
If Assumptions \ref{hyp:b} and \ref{hyp:b-anal} hold, then there exists $C=C(\rho)>0$ such that
\begin{equation}\label{eq:nonsmooth}
\| S(t_n)x - B_nx \| \leq \frac{C}{t_n}\Delta t \,\|x\|, \quad n\geq 1.
\end{equation}
\end{proposition}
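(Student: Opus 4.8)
The plan is to derive \eqref{eq:nonsmooth} from \cite[Theorem 3.2]{Lubich_et_al1996}, whose proof is written precisely for backward-Euler convolution quadrature applied to an abstract Volterra equation of the present type; the only real task is to check that our standing hypotheses imply the ones used there. First I would record that $A$, being positive and self-adjoint, is sectorial of angle $0$: for every $\psi<\pi$ there is $M_\psi$ with $\|(\zeta I+A)^{-1}\|_{{\mathcal B}(H)}\le M_\psi/|\zeta|$ for $\zeta\in\Sigma_\psi$. Next, Assumption \ref{hyp:b-anal} supplies an analytic extension of $\widehat b$ to a sector $\Sigma_\theta$, $\theta>\pi/2$, with $|\widehat b(z)|\le C|z|^{1-\rho}$ there, while \eqref{eq:sector} bounds $|\arg\widehat b(z)|$ by $\frac{\pi}{2}(\rho-1)<\frac{\pi}{2}$ on the right half-plane; combining the two (a Phragm\'en--Lindel\"of-type argument propagating the argument bound inward) one gets, on a slightly smaller sector $\Sigma_{\theta'}$ with $\theta'>\pi/2$, that $z/\widehat b(z)$ stays in a proper subsector $\Sigma_\psi$, $\psi<\pi$. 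Writing $\widehat S(z)=(zI+\widehat b(z)A)^{-1}=\frac{1}{\widehat b(z)}\big(\frac{z}{\widehat b(z)}I+A\big)^{-1}$ then yields the sectorial resolvent bound $\|\widehat S(z)\|_{{\mathcal B}(H)}\le M_\psi/|z|$ on $\Sigma_{\theta'}$, i.e.\ $\{S(t)\}_{t\ge0}$ is an \emph{analytic} resolvent family. These are exactly the ingredients \cite{Lubich_et_al1996} asks for, and \eqref{eq:nonsmooth} follows.

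Should a self-contained argument be preferred, I would reproduce Lubich's contour mechanism. From \eqref{eq:omegab} and \eqref{eq:rew} one has $\widehat B(z)x=\frac{z}{\Delta t}\,\widehat S\big(\frac{1-z}{\Delta t}\big)x+x$, so for $n\ge1$ Cauchy's formula on a circle $|z|=r<1$ gives
$$
B_nx=\frac{1}{2\pi i\,\Delta t}\oint_{|z|=r}z^{-n}\,\widehat S\Big(\frac{1-z}{\Delta t}\Big)x\,\dd z,
$$
whereas $S(t_n)x=\frac{1}{2\pi i}\int_\Gamma e^{t_n\zeta}\,\widehat S(\zeta)x\,\dd\zeta$ along a Hankel contour $\Gamma\subset\Sigma_{\theta'}$. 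After the substitution $\zeta=(1-z)/\Delta t$ and a deformation of the resulting circle onto $\Gamma$ (scaled so that its circular part has radius of order $1/t_n$ and truncated where $|\Delta t\,\zeta|$ reaches $1/2$), the difference takes the form
$$
\|S(t_n)x-B_nx\|\le C\|x\|\int_\Gamma\big|e^{t_n\zeta}-(1-\Delta t\,\zeta)^{-n}\big|\,\frac{|\dd\zeta|}{|\zeta|}.
$$
On the truncated part I would use the elementary inequality $|{-\log(1-w)}-w|\le C|w|^2$ for $|w|\le 1/2$ with $w=\Delta t\,\zeta$ to get $\big|e^{t_n\zeta}-(1-\Delta t\,\zeta)^{-n}\big|\le C\,t_n\,\Delta t\,|\zeta|^2\,e^{c\,t_n\Re\zeta}$, and on the tail $|\Delta t\,\zeta|>1/2$ bound the two kernels separately by their own exponential and polynomial decay; inserting these estimates and evaluating the contour integral yields $C\,\Delta t/t_n$.

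The hard part is not the contour bookkeeping but the step that makes it possible at all: upgrading mere \emph{parabolicity} of the deterministic equation (all that Assumption \ref{hyp:b} gives) to an \emph{analytic} resolvent family, i.e.\ the sectorial estimate for $\widehat S$ on a sector strictly larger than the right half-plane. This is precisely the role of Assumption \ref{hyp:b-anal}; without it the contour $\Gamma$ cannot be bent into $\Re\zeta<0$ and the $1/t_n$ smoothing is lost --- indeed, as the text remarks, nonsmooth-data estimates under parabolicity alone remain open. The remaining technical point is the uniform comparison of $(1-\Delta t\,\zeta)^{-n}$ with $e^{t_n\zeta}$ over all of $\Gamma$, which forces the truncation at $|\Delta t\,\zeta|\sim 1/2$ and the balancing of the circular radius against $1/t_n$; this is routine but genuinely more delicate than in the memoryless heat-equation case. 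Since \cite{Lubich_et_al1996} carries all of this out, I would simply invoke it.
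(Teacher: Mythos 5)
Your proposal is correct and follows essentially the same route as the paper, which proves this proposition simply by invoking \cite[Theorem 3.2]{Lubich_et_al1996}; your verification that Assumptions \ref{hyp:b} and \ref{hyp:b-anal} place the problem within that theorem's hypotheses (sectoriality of $A$ plus the analytic, sectorially bounded extension of $\widehat b$ giving an analytic resolvent) is exactly the implicit content of the paper's citation. The additional contour-integral sketch is a faithful outline of Lubich's mechanism but is not needed beyond the citation.
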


\begin{corollary}\label{cor:tee}
If Assumptions \ref{hyp:b} and \ref{hyp:b-anal} hold, then there exists $C=C(T,\gamma,\rho)$ such that
$$
\left(\Delta t\sum_{k=0}^n\|S(t_k)x-B_kx\|^2\right)^{1/2}\le C\Delta t^\gamma \|x\|_{s-\frac{1}{\rho}},\quad n\Delta t =T,
$$
for all $\gamma<\frac{\rho s}{2}$ where $0< s\le \frac{1}{\rho}$.
\end{corollary}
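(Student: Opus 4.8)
The plan is to interpolate between two endpoint estimates, exactly as in the proofs of Lemma \ref{lem:hol} and Proposition \ref{prop:semid}. The quantity to be controlled is $\bigl(\Delta t\sum_{k=0}^n\|S(t_k)x-B_kx\|^2\bigr)^{1/2}$, and I would regard this as a function of the regularity index of $x$. At one end, for $x\in D(A^{-1/(2\rho)})$, i.e. with the $\|\cdot\|_{-1/\rho}$ norm, I would use the $L^2$-stability of both $S(\cdot)x$ (Lemma \ref{lem:stab}) and the discrete orbit $\{B_kx\}$ (Corollary \ref{cor:smooth}): the triangle inequality gives $\Delta t\sum_{k=0}^n\|S(t_k)x-B_kx\|^2\le 2\Delta t\sum_{k=0}^n\|S(t_k)x\|^2+2\Delta t\sum_{k=1}^n\|B_kx\|^2+2\|x\|^2$, and the first sum is a left Riemann sum bounded (using monotonicity of $S$ or just nonnegativity) by $\int_0^\infty\|S(t)x\|^2\,\dd t\le C\|x\|_{-1/\rho}^2$, while Corollary \ref{cor:smooth} handles the discrete sum; thus this term is $O(1)\|x\|_{-1/\rho}$, i.e. the rate is $\gamma=0$ there. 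At the other end, for $x\in H$ (regularity index $s=1/\rho$, so $s-1/\rho=0$), I would invoke the deterministic nonsmooth data estimate, Proposition \ref{lem:LST96}: $\|S(t_k)x-B_kx\|\le (C/t_k)\Delta t\,\|x\|$ for $k\ge 1$. Summing, $\Delta t\sum_{k=1}^n\|S(t_k)x-B_kx\|^2\le C\Delta t^2\|x\|^2\,\Delta t\sum_{k=1}^n t_k^{-2}=C\Delta t^2\|x\|^2\Delta t^{-1}\sum_{k=1}^n k^{-2}\le C\Delta t\,\|x\|^2$, and the $k=0$ term vanishes since $S(0)=B_0=I$. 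This gives the rate $\gamma=1/2$ for $s=1/\rho$.

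So the two endpoints are: rate $0$ in the $\|\cdot\|_{-1/\rho}$ norm, and rate $1/2$ in the $H$ norm. Interpolating linearly in $s\in[0,1/\rho]$ — identifying $\|x\|_{s-1/\rho}=\|A^{(s-1/\rho)/2}x\|$ and using the fact that the domains of fractional powers of $A$ form an interpolation scale, so that the linear operator $x\mapsto (\Delta t^{1/2}(S(t_k)x-B_kx))_{k=0}^n$, bounded from $D(A^{-1/(2\rho)})$ to $\ell^2$ with norm $O(1)$ and from $H$ to $\ell^2$ with norm $O(\Delta t^{1/2})$, is bounded from the intermediate space $D(A^{(s-1/\rho)/2})$ to $\ell^2$ with norm $O(\Delta t^{\theta/2})$ where $s=\theta/\rho$ — yields
$$
\Bigl(\Delta t\sum_{k=0}^n\|S(t_k)x-B_kx\|^2\Bigr)^{1/2}\le C\Delta t^{\rho s/2}\|x\|_{s-\frac{1}{\rho}},\qquad 0\le s\le \tfrac{1}{\rho}.
$$
Since the statement asks only for $\gamma<\rho s/2$ with $0<s\le 1/\rho$, one can either quote this directly or, if one wants to avoid the endpoint $s=1/\rho$ in the interpolation, repeat the argument with $s$ replaced by $s-\epsilon$ and absorb $\epsilon$ into the strict inequality; the $C$ will then depend on $\gamma$ as claimed.

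The main obstacle, and really the only nontrivial point, is making sure that the discrete $\ell^2$-in-time quantity behaves correctly under interpolation: the endpoint in the rough norm uses Corollary \ref{cor:smooth}, which requires Assumption \ref{hyp:b} but not the analyticity Assumption \ref{hyp:b-anal}, while the endpoint in $H$ crucially uses Proposition \ref{lem:LST96}, which needs both assumptions — hence the corollary's hypotheses. One should check that the weighted discrete sum $\Delta t\sum_{k=1}^n t_k^{-2}$ really is $O(\Delta t^{-1})$ uniformly in $n$ with $n\Delta t=T$: indeed $\Delta t\sum_{k=1}^n t_k^{-2}=\Delta t^{-1}\sum_{k=1}^n k^{-2}\le \Delta t^{-1}\pi^2/6$, so there is no logarithmic loss, unlike what a naive integral comparison near $0$ might suggest. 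Everything else — the triangle inequalities, the Riemann-sum bound, and the interpolation — is routine and parallels the already-established Lemma \ref{lem:hol} and Proposition \ref{prop:semid}.
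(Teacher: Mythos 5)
Your overall strategy is the same as the paper's (two endpoint estimates in the scale of fractional power norms, then interpolation, with Corollary \ref{cor:smooth} at the rough end and Proposition \ref{lem:LST96} at the smooth end), and your $H$-endpoint computation $\Delta t\sum_{k=1}^n t_k^{-2}\le \Delta t^{-1}\pi^2/6$ is correct. However, there is a genuine gap at the rough-norm endpoint. You claim that $\Delta t\sum_{k=0}^n\|S(t_k)x\|^2$ is "a left Riemann sum bounded (using monotonicity of $S$ or just nonnegativity) by $\int_0^\infty\|S(t)x\|^2\,\dd t\le C\|x\|^2_{-1/\rho}$". Neither justification works: a Riemann sum of a nonnegative function is dominated by its integral only when the function is nonincreasing, and $t\mapsto\|S(t)x\|$ is \emph{not} monotone under Assumption \ref{hyp:b} --- the resolvent family is not a semigroup, and for the Riesz kernel the modes are $s_k(t)=E_\rho(-\lambda_k t^\rho)$ with $\rho\in(1,2)$, which oscillate about zero. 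Worse, as written your bound includes the $k=0$ term $\Delta t\|x\|^2$, which cannot be controlled by $C\|x\|^2_{-1/\rho}$ (the $-1/\rho$ norm is the \emph{weaker} one); this term should simply be dropped, since $S(t_0)x-B_0x=0$. So the asserted endpoint "rate $0$ in the $\|\cdot\|_{-1/\rho}$ norm" is not established by your argument.

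The paper avoids this by not aiming at the sharp endpoint: it bounds the grid sum directly through the pointwise smoothing estimate \eqref{eq:regS1} with $s=\frac{1}{\rho}-\epsilon$, i.e. $\|S(t_k)x\|\le C t_k^{-(1-\rho\epsilon)/2}\|x\|_{\epsilon-\frac{1}{\rho}}$, whose squares are summable against $\Delta t$ on $(0,T]$, while the $B_k$ part is handled by Corollary \ref{cor:smooth} exactly as you do. This yields the first endpoint in the slightly stronger norm $\|x\|_{\epsilon-\frac{1}{\rho}}$, and the $\epsilon$-loss is absorbed by the strict inequality $\gamma<\frac{\rho s}{2}$ in the statement after interpolation. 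If you replace your Riemann-sum step by this argument (or by a careful mode-by-mode splitting using \eqref{eq:regS1-2}, which you would have to write out), the rest of your proof --- the smooth-data endpoint via Proposition \ref{lem:LST96} and the interpolation on the fractional power scale of the self-adjoint operator $A$ --- goes through and coincides with the paper's proof.
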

\begin{proof}
It follows from \eqref{eq:regS1}, with $s=\frac{1}{\rho}-\epsilon$, and Corollary \ref{cor:smooth} that there is a constant $C=C(\epsilon, T)$ such that, for $0<\epsilon\le \frac{1}{\rho}$,

$$
\left(\Delta t\sum_{k=0}^n\|S(t_k)x-B_kx\|^2\right)^{1/2}\le C \|x\|_{\epsilon-\frac{1}{\rho}},~n\Delta t =T,\quad \epsilon >0,
$$
where we also used the fact that $B_0=S(t_0)=I$.
Furthermore, since $\|S(t_k)-B_k\|\le 2$ by Corollary \ref{cor:smooth}, it follows from \eqref{eq:nonsmooth} that
$$
\|S(t_k)x - B_kx \| \leq C \Delta t^{\frac12 -\epsilon} t_k^{\epsilon-\frac12}\|x\|, \quad k\ge 1,
$$
and thus, for some $C=C(\epsilon ,T,\rho)$,
$$
\left(\Delta t\sum_{k=0}^n\|S(t_k)x-B_kx\|^2\right)^{1/2}\le C \Delta t^{\frac12 -\epsilon} \|x\|.
$$
Interpolation finishes the proof.
\end{proof}


\subsection{Error estimate for the stochastic equation.}We can now state and proof the main result of this section.
\begin{theorem}\label{theo:order}
Let $A$ and $Q$ satisfy \eqref{eq:traceA}--\eqref{eq:Q} and let $b$ satisfy Assumptions \ref{hyp:b} and \ref{hyp:b-anal}. Suppose further that $\E\|u_0\|^2<\infty$. For $T>0$, let $\{u(t)\}_{t\in [0,T]}$ be the unique weak solution of \eqref{eq:stovolterra} and
let $u_n$ be the solution of the scheme \eqref{eq:time_scheme} with $T=n\Delta t$. Then for any $\gamma < (1 - \rho (\alpha - \kappa))/2$, there is $C=C(\rho,\E\|u_0\|^2)>0$ and $K=K(T,\alpha,\gamma,\kappa,\rho)>0$ such that
\begin{equation}\label{eq:timeorder}
(\E \, \| u(T) - u_n \|^2)^{1/2} \leq CT^{-1}\Delta t + K\Delta t^{\gamma}, \quad  t_n=n\Delta t=T.
\end{equation}
\end{theorem}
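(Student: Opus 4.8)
The plan is to subtract the discrete variation of constants formula \eqref{eq:distvar} from the continuous one \eqref{eq:stoconv}. Writing $w_{k+1}=W^Q(t_{k+1})-W^Q(t_k)=\int_{t_k}^{t_{k+1}}\dd W^Q(s)$ so that each $B_{n-k}w_{k+1}$ is a stochastic integral with a constant integrand, and using $t_n-t_k=t_{n-k}$, one obtains
$$
u(T)-u_n=\bigl(S(t_n)-B_n\bigr)u_0+\sum_{k=0}^{n-1}\int_{t_k}^{t_{k+1}}\bigl(S(t_n-s)-B_{n-k}\bigr)\,\dd W^Q(s).
$$
By the triangle inequality in $L^2(\Omega;H)$ it is then enough to bound the mean square norms $e_1$ and $e_2$ of the two terms separately, and to show $e_1^{1/2}\le CT^{-1}\Delta t$ and $e_2^{1/2}\le K\Delta t^\gamma$. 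For $e_1$ I would simply invoke the deterministic nonsmooth data estimate of Proposition \ref{lem:LST96}: $\|(S(t_n)-B_n)u_0\|\le Ct_n^{-1}\Delta t\,\|u_0\|=CT^{-1}\Delta t\,\|u_0\|$, hence $e_1^{1/2}\le CT^{-1}\Delta t\,(\E\|u_0\|^2)^{1/2}$, which is the first term in \eqref{eq:timeorder}.

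For $e_2$, It\^o's isometry gives $e_2=\sum_{k=0}^{n-1}\int_{t_k}^{t_{k+1}}\|(S(t_n-s)-B_{n-k})Q^{1/2}\|_{{\mathcal L}_2(H)}^2\,\dd s$, and I would split the integrand as
$$
S(t_n-s)-B_{n-k}=\bigl(S(t_n-s)-S(t_{n-k})\bigr)+\bigl(S(t_{n-k})-B_{n-k}\bigr),
$$
so that, up to a factor $2$, $e_2$ is controlled by a sum of squared H\"older increments plus $\Delta t\sum_{j=1}^{n}\|(S(t_j)-B_j)Q^{1/2}\|_{{\mathcal L}_2(H)}^2$ (after reindexing $j=n-k$). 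Since $S(\cdot)$ and the operators $B_j$ are selfadjoint and commute with $A$ (cf. \eqref{eq:sk} and \eqref{eq:rew}), writing $\|TQ^{1/2}\|_{{\mathcal L}_2(H)}^2=\|Q^{1/2}T^*\|_{{\mathcal L}_2(H)}^2=\sum_{l}\|Te_l\|^2\,(Qe_l,e_l)$ over the eigenbasis $\{e_l\}$ of $A$ reduces both Hilbert--Schmidt norms to scalar quantities. Pulling $(Qe_l,e_l)$ out of the sums and applying Lemma \ref{lem:hol} to the H\"older part and Corollary \ref{cor:tee} to the discretization part, both with $x=e_l$ and with a common parameter $s\in(0,\tfrac1\rho]$, leaves $C^2\Delta t^{2\gamma}\sum_{l}\lambda_l^{s-1/\rho}(Qe_l,e_l)=C^2\Delta t^{2\gamma}\Tr(A^{s-1/\rho}Q)$ for every $\gamma<\rho s/2$. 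Choosing $s=\tfrac1\rho+\kappa-\alpha$, which lies in $(0,\tfrac1\rho]$ by \eqref{eq:Q} and satisfies $\rho s/2=(1-\rho(\alpha-\kappa))/2$, and using \eqref{eq:trace_prop2} together with \eqref{eq:traceA}--\eqref{eq:Q},
$$
\Tr(A^{s-1/\rho}Q)=\Tr\bigl(A^{-\alpha}A^\kappa Q\bigr)\le \Tr(A^{-\alpha})\,\|A^\kappa Q\|_{{\mathcal B}(H)}<\infty,
$$
so $e_2^{1/2}\le K\Delta t^\gamma$ for any $\gamma<(1-\rho(\alpha-\kappa))/2$. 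Adding the two estimates gives \eqref{eq:timeorder}.

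Most of the analytic difficulty has already been absorbed into the preparatory results: the $L^2$-in-time deterministic nonsmooth data estimate (Corollary \ref{cor:tee}, which relies on the analyticity Assumption \ref{hyp:b-anal}) and the H\"older-in-time estimate for the resolvent family (Lemma \ref{lem:hol}). Within the present proof the only delicate points are the Hilbert--Schmidt/trace bookkeeping when $Q$ does not commute with $A$ (handled by the adjoint and eigenbasis identity above) and, above all, keeping the exponent $s$ consistent across Lemma \ref{lem:hol}, Corollary \ref{cor:tee} and the trace condition \eqref{eq:traceA}--\eqref{eq:Q}, since it is precisely the endpoint choice $s=\tfrac1\rho+\kappa-\alpha$ that produces the claimed rate $\gamma<(1-\rho(\alpha-\kappa))/2$.
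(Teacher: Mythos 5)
Your proposal is correct and follows essentially the same route as the paper: the same decomposition via the continuous and discrete variation of constants formulas, It\^o's isometry, the split of $S(t_n-s)-B_{n-k}$ into a H\"older increment plus a grid-point discretization error handled by Lemma \ref{lem:hol} and Corollary \ref{cor:tee}, the deterministic bound from Proposition \ref{lem:LST96}, and the endpoint choice $s=\tfrac1\rho+\kappa-\alpha$ with the trace estimate \eqref{eq:trace_prop2}. The only (immaterial) difference is bookkeeping: you expand the Hilbert--Schmidt norms in the eigenbasis of $A$ with weights $(Qe_l,e_l)$, whereas the paper applies the two lemmas directly to $x=Q^{1/2}e_i$; both reduce to the same quantity $\Tr(A^{s-\frac{1}{\rho}}Q)$.
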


\begin{proof}
If
$e_n = u(T)-u_n=u(t_n) - u_n$, then \eqref{eq:stoconv} and \eqref{eq:distvar} yields
$$
e_n =(S(t_n) - B_n) u_0+  \sum_{k=1}^{n} \left [ \int_{t_{k-1}}^{t_{k}} (S(t_n - s) - B_{n-k+1}) \, \dd W^{_Q}(s) \right ].
$$
Taking the expectation of the square of the $H$-norm of $e_n$ leads to, by independence  and  It\^o's isometry:
\begin{equation} \label{eq:time-error}
\E \| e_n \|^2  \le 2(  a + b),
\end{equation}
\noindent where $a$ denotes the deterministic part of the error:
\begin{equation}\label{eq:A}
a = \E\| (S(t_n) - B_n) u_0\|^2,
\end{equation}
\noindent and $b$ the stochastic part:
$$
b  =   \sum_{i=1}^{+\infty }\sum_{k=1}^{n} \int_{t_{k-1}}^{t_{k}} \| (S(t_n - s) - B_{n-k+1})Q^{1/2} e_i \|^2 \,\dd s.
$$
Thanks to (\ref{eq:nonsmooth}), $a$ can be bounded as
\begin{equation}\label{eq:error-a}
a \leq \frac{C}{t_n^2}\Delta t^2  \E\|u_0\|^2, \quad n \geq 1.
\end{equation}
We use Corollary \ref{cor:tee} and Lemma \ref{lem:hol} to bound $b$ as
\begin{align*}
b  & \leq  2\sum_{i=1}^{\infty }\sum_{k=1}^{n} \int_{t_{k-1}}^{t_{k}} \| (S(t_n - s) - S(t_n-t_{k-1}) ) Q^{1/2} e_i \|^2 \, \dd s \\
  &\quad +  2 \sum_{i=1}^{\infty }\sum_{k=1}^{n} \int_{t_{k-1}}^{t_{k}} \| (S(t_n - t_{k-1}) - B_{n-k+1})Q^{1/2} e_i \|^2 \, \dd s\nonumber  \\
& \le C\Delta t^{2\gamma}\sum_{i=1}^{\infty}\|Q^{1/2}e_i\|_{s-\frac{1}{\rho}}=C\Delta t^{2\gamma}\|A^{(s-\frac{1}{\rho})/2}Q^{1/2}\|^2_{\mathcal{L}_2(H)}\\
&\le C\Delta t^{2\gamma}\Tr(A^{s-\frac{1}{\rho}-\kappa})\|A^{\kappa}Q\|^2_{\mathcal{B}(H)}.
\end{align*}
Finally, we set $-\alpha=s-\frac{1}{\rho}-\kappa$ and conclude that $\gamma<\frac{\rho s}{2}=(1 - \rho (\alpha - \kappa))/2$.
\end{proof}
\begin{remark}\label{rem:timeaq}
In particular, if $Q=I$ then $d=1$, $\kappa=0$  and $\alpha>\frac12$ whence $\gamma< 1/2-\frac{\rho}{4}$. Also note, that it is clear from the proof that instead of (\ref{eq:traceA})--(\ref{eq:Q}) we could assume that $\|A^{(s-\frac{1}{\rho})/2}Q^{1/2}\|_{\mathcal{L}_2(H)}<\infty$ and obtain $\gamma<\frac{\rho s}{2}$. Then, for trace class noise; that is, when $\Tr(Q)<\infty$ we can take
$s=\frac{1}{\rho}$ and hence $\gamma<1/2$. Remarkably, this is the same rate as for the heat equation \cite{yan} independently of the value of $\rho$.
\end{remark}

\section{The fully discrete scheme}\label{sec:fds}
In this section we derive strong error estimates for a fully discrete scheme for \eqref{eq:stovolterra}. Both Assumptions 1 and 2 on $b$ are needed but in return we get optimal error bounds with no initial regularity. As the fully discrete scheme, similarly to the time semidiscretization  \eqref{eq:time_scheme}, we consider the recurrence
\begin{equation} \label{eq:full_scheme}
u_{n,h} - u_{n-1,h} + \Delta t \left ( \sum_{k=1}^{n} \omega_{n-k}\,  A_h u_{k,h} \right ) = P_h(W^{_Q}(t_n) - W^{_Q}(t_{n-1})), \quad n\geq 1,
\end{equation}
with $u_{0,h}=P_hu_0$.
Again, the solution is given by the discrete variation of constants formula
\begin{equation}
u_{n,h} = B_{n,h}P_hu_0 + \sum_{k=0}^{n-1} B_{n-k,h} P_h \Delta W^Q_{k+1},
\end{equation}
\noindent where $\Delta W_{k+1} = W(t_{k+1}) - W(t_k)$ and $\{B_{k,h}\}_{k\geq 0}$ is a family of linear bounded operators with $B_{0,h}=I$.
\begin{theorem}\label{theo:full}
Let $A$ and $Q$ satisfy (\ref{eq:traceA})--(\ref{eq:Q}) and let $b$ be satisfy Assumptions \ref{hyp:b} and \ref{hyp:b-anal}.
Suppose further that $\E\|u_0\|^2<\infty$. For $T>0$, let $\{u(t)\}_{t\in [0,T]}$ be the unique weak solution of \eqref{eq:stovolterra} and
let $u_{n,h}$ be the solution of the scheme \eqref{eq:full_scheme} with $T=n\Delta t$. If \eqref{eq:Ritz} holds, then there is $C=C(\rho,\E\|u_0\|^2)>0$ and $K=K(T,\alpha,\gamma, \kappa, \rho)>0$ such that
\begin{equation}\label{eq:fullerror}
\left(\mathbb{E}\|u(T)-u_{n,h}\|^2\right)^{1/2}\le C(\Delta t T^{-1}+h^2T^{-\rho})+K(\Delta t^{\gamma}+h^\nu),~n\Delta t=T,
\end{equation}
where $\gamma<(1-\rho(\alpha-\kappa))/2$ and $\nu\le\frac{1}{\rho}-\alpha+\kappa$.
\end{theorem}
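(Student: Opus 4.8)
The plan is to reduce everything to the $h$-uniform deterministic estimates of Section~\ref{sec:time} together with the spatial estimates of Section~\ref{sec:space}. Writing $e_n=u(T)-u_{n,h}$ with $t_n=T$, the continuous and discrete variation of constants formulas give
\[
e_n=(S(t_n)-B_{n,h}P_h)u_0+\sum_{k=1}^n\int_{t_{k-1}}^{t_k}\bigl(S(t_n-s)-B_{n-k+1,h}P_h\bigr)\,\dd W^{_Q}(s),
\]
so that, by independence of the Wiener increments and It\^o's isometry, $\E\|e_n\|^2\le 2(a+b)$ with $a=\E\|(S(t_n)-B_{n,h}P_h)u_0\|^2$ and $b=\sum_{i\ge 1}\sum_{k=1}^n\int_{t_{k-1}}^{t_k}\|(S(t_n-s)-B_{n-k+1,h}P_h)Q^{1/2}e_i\|^2\,\dd s$. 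Throughout I would exploit that all deterministic results of Section~\ref{sec:time} --- Proposition~\ref{prop:sk}, hence Corollary~\ref{cor:smooth}, Lemma~\ref{lem:hol}, Proposition~\ref{lem:LST96} and Corollary~\ref{cor:tee} --- remain valid for the semidiscrete data $A_h$, $S_h(\cdot)P_h$, $B_{n,h}P_h$ with constants independent of $h$: by \eqref{eq:resolvAh} the operators $-A_h$ are uniformly sectorial, the functions $s_{h,k}$ solve ODEs of the same form as \eqref{eq:skeq}, and the constants of Proposition~\ref{prop:sk} depend only on $\rho$.

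For the deterministic term $a$ I would split $S(t_n)-B_{n,h}P_h=(S(t_n)-S_h(t_n)P_h)+(S_h(t_n)P_h-B_{n,h}P_h)$. The second summand is $\mathcal{O}(t_n^{-1}\Delta t)$ in $\mathcal{B}(H)$ by Proposition~\ref{lem:LST96} applied to the semidiscrete equation~\eqref{semid}. For the first I need a nonsmooth-data spatial estimate $\|(S(t)-S_h(t)P_h)x\|\le Ch^2t^{-\rho}\|x\|$, which I would obtain from the inversion formula
\[
S(t)-S_h(t)P_h=\frac{1}{2\pi\mathrm i}\int_\Gamma e^{zt}\,\widehat b(z)^{-1}\bigl((\lambda(z)I+A)^{-1}-(\lambda(z)I+A_h)^{-1}P_h\bigr)\,\dd z,
\]
where $\lambda(z)=z/\widehat b(z)$. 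Here Assumption~\ref{hyp:b-anal} lets one deform $\Gamma$ into the left half-plane with $\lambda(z)$ confined to a sector of angle $<\pi$ about $\mathbb{R}_+$, the standard finite element resolvent bound $\|(\mu I+A)^{-1}-(\mu I+A_h)^{-1}P_h\|_{\mathcal{B}(H)}\le Ch^2$ (valid, uniformly in $\mu$ on that sector, under the $H^2$-regularity underlying \eqref{eq:Ritz}) controls the integrand, and $|\widehat b(z)|^{-1}\le C|z|^{\rho-1}$ (from Assumption~\ref{hyp:b-anal} and the lower bound \eqref{eq:omega0}) together with the scaling $|z|\sim t^{-1}$ produces the weight $t^{-\rho}$. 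Taking $x=u_0$ then gives $\sqrt a\le C(\Delta t\,T^{-1}+h^2T^{-\rho})(\E\|u_0\|^2)^{1/2}$, accounting for the first bracket in \eqref{eq:fullerror}.

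For the stochastic term $b$ I would split the integrand, on each subinterval $[t_{k-1},t_k]$, into a pure spatial error $S(t_n-s)-S_h(t_n-s)P_h$, a temporal H\"older increment $S_h(t_n-s)P_h-S_h(t_n-t_{k-1})P_h$, and a temporal discretization error $S_h(t_n-t_{k-1})P_h-B_{n-k+1,h}P_h$, and treat the three resulting double sums separately, following the proof of Theorem~\ref{theo:order}. After a change of variables the first is at most $\int_0^\infty\|(S(\sigma)-S_h(\sigma)P_h)Q^{1/2}e_i\|^2\,\dd\sigma\le Ch^{2\nu}\|Q^{1/2}e_i\|_{\nu-1/\rho}^2$ by Proposition~\ref{prop:semid}; the other two are bounded exactly as in Lemma~\ref{lem:hol} and Corollary~\ref{cor:tee}, now for $S_h$ and $B_{n,h}$, and contribute $C\Delta t^{2\gamma}\|A_h^{(s-1/\rho)/2}P_hQ^{1/2}e_i\|^2$ for any $0<s\le 1/\rho$ and $\gamma<\rho s/2$, which by \eqref{eq:Ah} (applicable since $(1/\rho-s)/2\le 1/(2\rho)<1/2$ by \eqref{eq:Q}) is at most $C\Delta t^{2\gamma}\|A^{(s-1/\rho)/2}Q^{1/2}e_i\|^2$. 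Summing over $i$ and using $\|A^{(\sigma-1/\rho)/2}Q^{1/2}\|_{\mathcal{L}_2(H)}^2=\Tr(A^{\sigma-1/\rho-\kappa}A^\kappa Q)\le\Tr(A^{-\alpha})\|A^\kappa Q\|_{\mathcal{B}(H)}$ with $\sigma=\nu\le 1/\rho-\alpha+\kappa$ for the spatial piece and $\sigma=s:=1/\rho-\alpha+\kappa$ for the temporal pieces (so that $\gamma<\rho s/2=(1-\rho(\alpha-\kappa))/2$) gives $\sqrt b\le K(\Delta t^\gamma+h^\nu)$. Adding the estimates for $a$ and $b$ and recalling $n\Delta t=T$ yields \eqref{eq:fullerror}.

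The main obstacle is the nonsmooth-data spatial bound $\|(S(t)-S_h(t)P_h)x\|\le Ch^2t^{-\rho}\|x\|$: it is the only genuinely new deterministic ingredient, it is precisely where Assumption~\ref{hyp:b-anal} --- the existence of an analytic resolvent, hence a Bromwich contour deformable into the left half-plane --- is indispensable, since parabolicity alone (Assumption~\ref{hyp:b}) does not suffice; and it has to be combined with the routine but lengthy verification that every estimate of Section~\ref{sec:time} carries over, with $h$-independent constants, to the family $\{A_h\}_{h>0}$.
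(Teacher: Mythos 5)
Your proposal is correct and follows essentially the same strategy as the paper: the stochastic part of your argument (splitting the integrand into the spatial error $S-S_hP_h$, the H\"older increment of $S_h$, and the temporal error $S_h-B_{\cdot,h}$, bounding the first via Proposition \ref{prop:semid} exactly as in \eqref{eq:e2}, rerunning Lemma \ref{lem:hol} and Corollary \ref{cor:tee} for $A_h$ with $h$-uniform constants, and transferring $A_h$-norms to $A$-norms via \eqref{eq:Ah}) is precisely the paper's treatment of its terms $e_2$ and $e_3$, including the observation that the constants in Proposition \ref{prop:sk} depend only on $\rho$. The one place you diverge is the deterministic term: the paper does not re-derive anything there, it simply invokes the fully discrete nonsmooth-data estimate of Lubich--Sloan--Thom\'ee \cite[Theorems 2.1 and 3.2]{Lubich_et_al1996}, which directly gives $\|S(T)u_0-B_{n,h}P_hu_0\|\le C(\Delta t\,T^{-1}+h^2T^{-\rho})\|u_0\|$, whereas you split off the spatial part and sketch a Laplace-inversion proof of $\|(S(t)-S_h(t)P_h)x\|\le Ch^2t^{-\rho}\|x\|$. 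That sketch is indeed the standard route (and is how the cited result is proved), but as written it has one soft spot: you need the lower bound $|\widehat b(z)|\ge c|z|^{1-\rho}$ on the whole contour in the sector $\Sigma_\theta$, and \eqref{eq:omega0} only provides it on the positive real axis; Assumption \ref{hyp:b-anal} gives the matching upper bound but not the sectorial lower bound, so this either needs an extra argument or should simply be replaced by the citation, as the paper does. With that repair (or citation) your proof is complete and otherwise coincides with the paper's.
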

\begin{proof}
We decompose the error as
\begin{align*}
u(T)-u_{n,h}&=S(T)u_0-B_{n,h}P_hu_0\\
&+ \int_0^TS(T-s)\,\dd W^Q(s)
-\int_0^TS_h(T-s)P_h\,\dd W^Q(s)\\
&+\int_0^TS_h(T-s)P_h\,\dd W^Q(s)-\sum_{k=0}^{n-1} B_{n-k,h} P_h \Delta W^Q_{k+1}\\
&:=e_1+e_2+e_3.
\end{align*}
First we bound $e_1$ which is the deterministic error. Under Assumptions \ref{hyp:b} and \ref{hyp:b-anal} we have that
$$
(\E\|e_1\|^2)^{1/2}\le C(\Delta t T^{-1}+h^2T^{-\beta-1})(\E\|u_0\|^2)^{1/2}
$$
by \cite[Theorems 2.1 and 3.2]{Lubich_et_al1996}.
Next, $e_2$ has already been bounded in \eqref{eq:e2} as
\begin{equation}\label{eq:ee2}
\E\|e_2\|^2\le Ch^{2\nu}\|A^{(\nu-\frac{1}{\rho})/2}Q^{1/2}\|_{{\mathcal L}_2(H)}^2\le  Ch^{2\nu}\Tr(A^{\nu-\frac{1}{\rho}-\kappa})\|A^{\kappa}Q\|.
\end{equation}
Finally, the proof of Theorem \ref{theo:order} shows that,
\begin{equation}\label{eq:ee3}
\E\|e_3\|^2\le C \Delta t^{2\gamma}  \| A_h^{(s-\frac{1}{\rho})/2} (P_hQP_h)^{1/2}\|^2_{{\mathcal L}_2(H)}.
\end{equation}
Set $-r=(s-\frac{1}{\rho})/2$ and note that since $0<s\le \frac{1}{\rho}$ we have that $0\le r < 1/2$. Then,
\begin{equation*}
\begin{aligned}
\|A_h^{-r}(P_hQP_h)^{1/2}\|_{{\mathcal L}_2(H)}^2
&=\Tr(P_hA_h^{-r}P_hQP_hA_h^{-r}P_h)=\|A_h^{-r}P_hQ^{1/2}\|_{{\mathcal L}_2(H)}^2\\
&\le \|A_h^{-r}P_hA^r\|^2_{\mathcal{B}(H)}\|A^{-r}Q^{1/2}\|^2_{{\mathcal L}_2(H)}.
\end{aligned}
\end{equation*}
Thanks to (\ref{eq:Ah}) with $\delta =r \in [0,1/2)$, it follows that $\|A_h^{-r}P_hA^r\|_{\mathcal{B}(H)}\le 1$. Hence,
\begin{equation*}
\E\|e_3\|^2\le C\Delta t^{2\gamma}\|A^{(s-\frac{1}{\rho})/2}Q^{1/2}\|^2_{\mathcal{L}_2(H)}
\le C\Delta t^{2\gamma}\Tr(A^{s-\frac{1}{\rho}-\kappa})\|A^{\kappa}Q\|^2_{\mathcal{B}(H)},
\end{equation*}
and the proof is complete.
\end{proof}
\begin{remark}\label{rem:fullaq}
We would like to highlight two important special cases. Firstly, if $Q=I$ then $d=1$, $\kappa=0$  and $\alpha>\frac12$. Hence $\nu< \frac{1}{\rho}-\frac{1}{2}$ and ${\gamma}<1/2-\frac{\rho}{4}$. As before,  we could assume, that $\|A^{(\nu-\frac{1}{\rho})/2}Q^{1/2}\|_{\mathcal{L}_2(H)}<\infty$ instead of \eqref{eq:traceA} and \eqref{eq:Q} and get a convergence rate of order $\nu$ is space and $\gamma<\frac{\rho \nu}{2}$ in time. In particular, if $\Tr(Q)<\infty$, then we may set $\nu=\frac{1}{\rho}$. Thus, the time order is almost $1/2$, the same as for the heat equation with trace class noise, but the space order is less than $1$, which is the space order for the heat equation, see \cite{yan}.
\end{remark}
\begin{remark}
The pure time-discretization as well as the fully discrete scheme can be studied for smooth initial data under Assumptions \ref{hyp:b} and \ref{hyp:b-anal} on $b$. Using \cite[Theorem 3.1]{McLThomee93} and \cite[Lemma 3.2]{Lubich_et_al1996} one arrives at the deterministic estimate
\begin{multline}\label{eq:detnon}
\|S(T)u_0-B_{n,h}P_hu_0\|\\
\le C(T)(h^2+k)\left(\|u_0\|_2+\int_0^T\|\dot{S}(s)u_0\|_2\,\dd s + \int_0^{T}\|\ddot{S}(s)u_0\|\,\dd s \right).
\end{multline}
If $u_0\in \mathcal{D}(A)$, then $u(t)=S(t)u_0$ is a strong solution of \eqref{eq:detvolterra}, see \cite[Proposition 1.2]{pruss}; that is, $u(t)=S(t)u_0$ satisfies \eqref{eq:detvolterra} with $f\equiv 0$ for all $t>0$. Then
$$
\ddot{S}(t)u_0+\int_0^t b(t-s)A\dot{S}(s)u_0\,\dd s+b(t)Au_0=0,~t>0,
$$
and thus
$$
\int_0^{T}\|\ddot{S}(s)u_0\|\,\dd s\le C(T) \left(\int_0^T\|\dot{S}(s)u_0\|_2\,\dd s+\|u_0\|_2\right).
$$
Therefore, using stability, interpolation and \eqref{eq:sss}, it follows that
$$
\|S(T)u_0-B_{n,h}P_hu_0\|\le C(T,\epsilon)(h^{s}+k^{s/2})\|u_0\|_{s(1+\epsilon)},~0\le s\le 2.
$$
The latter estimate can be used to replace the first term in the bound \eqref{eq:fullerror} in case $\E\|u_0\|_{s(1+\epsilon)}^2<\infty$, $0\le s\le 2$.
The estimates for the pure time-discretization are analogous using \cite[Theorem 3.1]{Lubich_et_al1996} which states \eqref{eq:detnon} with $h=0$ and $B_{n,0}=B_n$.
\end{remark}

\end{document}